\newtheorem{theorem}{Theorem}[section]
\newtheorem{proposition}[theorem]{Proposition}
\newtheorem{lemma}[theorem]{Lemma}
\newtheorem{corollary}[theorem]{Corollary}
\newtheorem{proof}{\textmd{\textit{Proof.}}}
\newtheorem{remark}[theorem]{Remark}
\newtheorem{example}[theorem]{Example}
\newtheorem{definition}[theorem]{Definition}
\newtheorem{acknowledgement}{\textmd{\textit{Acknowledgements.}}}
\newcommand{\qedd}{\hfill \Box}
\newcommand{\R}{\ensuremath{\mathbb{R}}}
\newcommand{\Sph}{\ensuremath{\mathbb{S}}}
\title{The cut locus of a Randers rotational 2-sphere of revolution
\footnote{
Mathematics Subject Classification (2010)\,:\,53C60, 53C22.}
\footnote{
Keywords: Randers metrics, 2-sphere of revolution, cut locus, Gaussian curvature.}
}
\author{Rattanasak HAMA, Jaipong KASEMSUWAN, Sorin V. SABAU}
\date{}
\begin{document}

\maketitle

\begin{abstract}
In the present paper we study structure of the cut locus of a Randers rotational 2-sphere of revolution $(M,F=\alpha+\beta)$. We show that in the case when Gaussian curvature of the Randers surface is monotone along a meridian the cut locus of a point $q\in M$ is a point on a subarc of the opposite half bending meridian or of the antipodal parallel (Theorem \ref{thm_F_cut_locus}). More generally, when the Gaussian curvature is not monotone along the meridian, but the cut locus of a point $q$ on the equator is a subarc of the same equator, then the cut locus of any point $\widetilde{q}\in M$ different from poles is a subarc of the antipodal parallel (Theorem \ref{thm_F_cut_2}). Some examples are also given at the last section.

\end{abstract}

\section{Introduction}

\quad The study of the global behaviour of geodesics, conjugate points and cut locus is a fundamental problem in modern differential geometry. In the Riemannian case, an extensive literature is available (see \cite{AT}, \cite{SST}, \cite{ST}), but in the more general case of a Finsler manifold, the results are not so easily obtained. The main difficulty is that the dependence of the metric on the direction implies the non-symmetry of the distance function and the non-reversibility of the geodesics.

 Finsler manifolds $(M,F)$ generalize the Riemannian ones in the sense that they are defined by a norm $F:TM\to[0,\infty)$ with the properties
\begin{itemize}
\item[(i)] $F$ is positive and differentiable on $\widetilde{TM}:=TM\setminus\{0\}$;
\item[(ii)] $F$ is $1$-positive homogeneous, i.e. $F(x,\lambda y)=\lambda\cdot F(x,y)$ for any $\lambda>0$ and for all $(x,y)\in\widetilde{TM}$;
\item[(iii)] the Hessian matrix $g_{ij}(x,y):=\frac{1}{2}\frac{\partial^2F^2(x,y)}{\partial y^i\partial y^j}$, $i,j\in\{1,...,n\}$, is positive definite on $\widetilde{TM}.$
\end{itemize}

 Here $TM$ denotes the tangent bundle of an $n$-dimensional smooth manifold $M$ and $(x,y)$ the canonical coordinates on $TM$. The Finsler structure is called absolute homogeneous if the homogeneity condition (ii) is replaced by $F(x,\lambda y)=\vert \lambda\vert\cdot F(x,y)$ for any $\lambda\in\R$. 

A Finsler norm $F$ determines and it is determined by its indicatrix bundle $SM:=\cup_{x\in M}S_xM$, where $S_xM:=\{y\in T_xM:F(x,y)=1\}$.

Obviously, the simplest Finsler manifolds are the Riemannian cases, but this is the trivial case for us.

Less trivial examples are deformations of Riemannian metrics by linear forms $\beta=b_i(x)y^i$ defined on $TM$. This type of Finsler manifolds include Randers, Kropina and Matsumoto metrics \cite{Mat}.

A Finsler norm can be used for defining the integral length $\mathcal{L}_F$ of a $C^\infty$ curve $\gamma:[a,b]\to M$ by
\begin{equation*}
\mathcal{L}_F(\gamma\big\vert_{[a,b]})=\int_a^bF(\gamma(t),\dot{\gamma}(t))dt,
\end{equation*}
where $\dot{\gamma}(t)=\frac{d\gamma}{dt}$ is the tangent vector of $\gamma$. This definition easily extends to the integral length of any piecewise $C^\infty$ curve on $M.$

A smooth curve $\gamma$ on a Finsler manifold that minimizes the integral length $\mathcal{L}_F$ over the set of all piecewise $C^\infty$ curves with fixed end points is called an {\it F-geodesic}.

Any $F$-geodesic $\gamma$ emanating from a point $p$ in a compact Finslerian (or Riemannian) manifold is losing its global minimizing property of a point $q$ on $\gamma$. Such point is called a {\it $F$-cut point of $p$} along $\gamma$. The {\it $F$-cut locus} of a point $p\in M$ is the set of all cut points along all geodesics emanating from $p$ on a Finsler manifold. This is an important geometrical object related to the topology of the manifold and to the global geometrical properties of the Finsler manifold.

Even though in general the cut locus may have a very complicated structure, it is known that the $F$-cut locus $\mathcal{C}^F_p$ of a point $p$ on a Finsler surface is a local tree and that any two points on the same connected component of $\mathcal{C}^F_p$ can be joined by a rectifiable Jordan arc in $\mathcal{C}^F_p$ (see \cite{SaT} for details).

Based on this theoretical result, we have studied in \cite{HCS} the actual structure of the cut locus of a point on a Randers rotational surface of revolution homeomorphic to $\R^2$.

The main aim of the present paper is to explicitly determine the structure of the cut locus of a point of a $2$-sphere of revolution endowed with a Randers rotational metric.

Randers metrics are special Finsler metrics whose indicatrices are obtained by rigid translations of the Riemannian unit sphere. It was Shen \cite{S} who pointed out for the first time that Randers metrics give solutions to the classical Zermelo's navigation problem, namely:\\

{\it Find the paths of  shortest time travel between two points under the influence of a wind or a current when we travel by a boat capable of a certain maximum speed.}\\

Formally, if we consider the background landscape to be a Riemannian manifold $(M,h)$, endowed with a vector field $W$ on $M$, $\Vert W \Vert_h <1$, then the shortest time travel paths are precisely the geodesics of a Finsler metric of Randers type
\begin{equation*}
F(x,y)=\alpha(x,y)+\beta(x,y)=\frac{\sqrt{\lambda\cdot\Vert y\Vert^2_h+W^2_0}}{\lambda}-\frac{W_0}{\lambda}
\end{equation*}
uniquely induced by the navigation data $(h,W)$. Here $W=W^i\cdot\frac{\partial}{\partial x^i}$ is the velocity vector field of the wind, $\lambda=1-\Vert W\Vert^2_h$, $W_0=h(W,y)$.

The corresponding Riemannian metric $\alpha=\sqrt{a_{ij}(x)y^iy^j}$ and $1$-form $\beta=b_i(x)y^i$ are given by
\begin{equation*}
a_{ij}(x)=\frac{\lambda\cdot h_{ij}+W_iW_j}{\lambda^2}\quad \text{and}\quad b_i=-\frac{W_i}{\lambda},
\end{equation*}
where $W_i:=h_{ij}W^j$.

This Randers metric satisfies all three conditions in the definition of a Finsler metric provided $\Vert W\Vert_h<1$ (see \cite{BCS}, \cite{BR}, \cite{R}, \cite{S} for details).

Our main theorems on the structure of the $F$-cut locus of a surface of revolution endowed with a Randers rotational metric are the following.

\begin{theorem}\label{thm_F_cut_locus}
Let $(M,F)$ be a Randers rotational 2-sphere of revolution with navigation data $(h,W)$, where $W=\mu\cdot\frac{\partial}{\partial \theta}$ is the wind blowing along parallels, $\mu<\{\frac{1}{\max\{m(r)\}}:r\in [0,2a]\}$, with a pair of poles $p,q$, $d_h(p,q)=2a$ and satisfying 
\begin{itemize}
\item $M$ is symmetric with respect to $\{r=a\}$,
\item the flag curvature $\mathcal{K}$ is monotone along a meridian.
\end{itemize}
Then the $F$-cut locus $\mathcal{C}^F_x$ of a point $x\in M\setminus\{p,q\}$ with $\{\theta(x)=0\}$ is
\begin{enumerate}
\item The subarc of the opposite half bending meridian,
\begin{equation*}
\mathcal{C}^F_x=\varphi(d(x,\tau(t)),\tau(t)),\quad t\in[c,2a-c],
\end{equation*}
where $\varphi$ is the flow of the wind, when $\mathcal{K}$ is monotone non-increasing. 
\item The following subarc of the antipodal parallel $\{r=2a-r(x)\}$ to $x$:
\begin{equation*}
\mathcal{C}_x^F=r^{-1}(2a-r(x))\cap \theta^{-1}\{\mathcal{H}(m)+\psi(x),2\pi-(\mathcal{H}(m)-\psi(x))\}.
\end{equation*}
where $\psi(x)=\mu\cdot d_h(x,\hat{q}_0)$, $\hat{q}_0$ is the $h$-first conjugate point of $x$ with respect to $h$, $m:=m(r(x))$, when $\mathcal{K}$ is monotone non-decreasing.
\item 
A single point on the antipodal parallel $\mathcal{C}^F_x=(2a-r(x),\pi(1+\mu R))$, where $R$ is radius of sphere, when $\mathcal{K}=\frac{1}{R^2}$ is constant.
\item If the cut locus of $x\in M\setminus\{p,q\}$ is a single point, then $\mathcal{K}$ is constant.
\end{enumerate}
\end{theorem}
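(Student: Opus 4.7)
The plan is to reduce the $F$-cut locus computation to the classical Riemannian one via the Zermelo navigation correspondence. Since $W=\mu\,\frac{\partial}{\partial\theta}$ is tangent to the parallels and (by rotational symmetry) is Killing for $h$, its flow $\varphi_t$ consists of rigid rotations of $(M,h)$. A standard fact from navigation theory then says that $\sigma(t)=\varphi_t(c(t))$ is a unit $F$-speed geodesic starting at $x$ if and only if $c(t)$ is a unit $h$-speed geodesic starting at $x$, the $F$-arclength of $\sigma$ and the $h$-arclength of $c$ sharing the same parameter $t$. Consequently, $\sigma(t_0)$ is the $F$-cut point of $x$ along $\sigma$ precisely when $c(t_0)$ is the $h$-cut point of $x$ along $c$, and one obtains the pointwise identification
\[
\cC^F_x=\bigl\{\varphi_{d_h(x,\wt y)}(\wt y):\wt y\in \cC^h_x\bigr\}.
\]

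With this reduction in hand, I would invoke the classical description of $\cC^h_x$ for a rotationally symmetric 2-sphere with $\cK$ monotone along a meridian (Sinclair--Tanaka, cf.\ \cite{SST},\cite{ST}): if $\cK$ is non-increasing then $\cC^h_x$ is a subarc of the opposite meridian $\{\theta=\pi\}$; if $\cK$ is non-decreasing then $\cC^h_x$ is a subarc of the antipodal parallel $\{r=2a-r(x)\}$ whose endpoints are the two first $h$-conjugate points $\wh q_0,\wh q_0'$ of $x$; and if $\cK$ is constant then $\cC^h_x$ reduces to the antipode. Pushing these descriptions forward by $\varphi$ yields the three stated formulas: in (1) the subarc is transported along the wind into the bending meridian; in (2) each conjugate endpoint is rotated by angle $\mu\,d_h(x,\wh q_0)=\psi(x)$, which, combined with the Clairaut integral $\cH(m)$ recording the angular displacement of $h$-geodesics from $x$ to $\wh q_0$, gives the two endpoints $\cH(m)+\psi(x)$ and $2\pi-(\cH(m)-\psi(x))$; in (3) the antipode rotates by $\mu$ times the meridian half-length $\pi R$, producing the angle $\pi(1+\mu R)$.

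Part (4) is handled by contrapositive: if $\cC^F_x$ consists of a single point, the bijection above forces $\cC^h_x$ to be a single point, and a classical rigidity theorem for Riemannian 2-spheres of revolution then identifies $(M,h)$ as a round sphere of constant curvature.

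The main technical obstacle is the precise angular book-keeping in (2): one has to verify that the two $h$-geodesics from $x$ realizing the first conjugate point are mirror images across the meridian $\{\theta=0\}$ of equal $h$-length, determine their terminal angles on the antipodal parallel via Clairaut's relation (yielding the term $\cH(m)$), and then superimpose the wind rotation $\psi(x)$ dictated by the flow identification. Once these ingredients are in place, the remaining verification that the image formula above matches the explicit expressions in (1)--(3) is essentially formal.
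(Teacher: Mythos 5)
Your proposal is correct and follows essentially the same route as the paper: reduce to the Riemannian case via the flow of the Killing wind (the paper's Propositions \ref{F_Geodesic}, \ref{Prop_F_con} and \ref{prop_F_cut}, which are exactly the ``standard navigation facts'' you invoke), apply the Sinclair--Tanaka classification of the $h$-cut locus, and transport it by $\varphi$ with the same angular book-keeping in cases (1)--(3). The only point to add is in (4): after concluding that $(M,h)$ is a round sphere you still need the one-line observation that the flag curvature coincides with the Gaussian curvature (Lemma \ref{F_curvature}, or the Bao--Robles--Shen constant-flag-curvature result the paper cites) to pass from ``$G$ constant'' to ``$\mathcal{K}$ constant.''
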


More generally, if the Gaussian curvature of $h$, or of $F$, is not monotone, the following characterization of the cut locus is possible.
\begin{theorem}\label{thm_F_cut_2}
Let $(M,F=\alpha+\beta)$ be the Randers rotational 2-sphere of revolution constructed from the navigation data $(h,W)$ of a 2-sphere of revolution $(M,h)$.

If the $F$-cut locus of a point $x$ on the equator $\{r=a\}$ is a subarc of the equator $\{r=a\}$, then the $F$-cut locus of any point $\widetilde{x}$ with $r(\widetilde{x})\in(0,2a)\setminus\{a\}$ is a subarc of the antipodal parallel $\{r=2a-r(\widetilde{x})\}$.
\end{theorem}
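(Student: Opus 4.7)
The plan is to transfer the statement from $F$ to $h$ via the Killing-wind correspondence, prove the analogous Riemannian statement on $(M,h)$, and then transfer back. The key fact is that because $W=\mu\,\partial/\partial\theta$ is a Killing field of $h$, the map $\sigma\mapsto\gamma$ defined by $\gamma(t):=\varphi_t(\sigma(t))$, where $\varphi_t$ is the flow of $W$, is a bijection between $h$-unit-speed geodesics from a fixed point $p$ and $F$-unit-speed geodesics from $p$, preserving arclength and cut times. Since $\varphi_t$ acts by the rotation $\theta\mapsto\theta+\mu t$, it preserves the $r$-coordinate, so the $F$-cut locus of $p$ and the $h$-cut locus of $p$ occupy exactly the same union of parallels.

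With this in hand, the hypothesis is first rephrased in Riemannian terms: the assumption that $\mathcal{C}^F_x\subset\{r=a\}$ forces $\mathcal{C}^h_x\subset\{r=a\}$ as well, and by the rotational symmetry of $h$ this then holds for every point on the equator. The goal is thereby reduced to the Riemannian statement: if the $h$-cut locus of a point on the equator is a subarc of the equator, then the $h$-cut locus of any $\widetilde x$ with $r(\widetilde x)\in(0,2a)\setminus\{a\}$ is a subarc of the antipodal parallel $\{r=2a-r(\widetilde x)\}$.

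For this Riemannian step I would use the reflection symmetry of $(M,h)$ across the meridian through $\widetilde x$: this symmetry pairs each $h$-geodesic from $\widetilde x$ with its mirror image, and the two must meet at their common cut point on the opposite half-meridian. Whether that meeting point falls on the opposite meridian itself or on the antipodal parallel is governed by the half-period integral derived from Clairaut's relation, analyzed in the spirit of \cite{SST} and \cite{ST}. The equator hypothesis provides a concrete constraint on this half-period function, and the same constraint, evaluated along geodesics emanating from $\widetilde x$, pins the meeting points on the parallel $\{r=2a-r(\widetilde x)\}$. In particular no curvature monotonicity along meridians is needed, which is precisely what makes Theorem \ref{thm_F_cut_2} more general than Theorem \ref{thm_F_cut_locus}.

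The final step is to transfer back: applying $\varphi_{T_{\mathrm{cut}}}$ to $\mathcal{C}^h_{\widetilde x}$ produces $\mathcal{C}^F_{\widetilde x}$, and since $\varphi_t$ preserves parallels, the image remains a subarc of $\{r=2a-r(\widetilde x)\}$. The main obstacle is the middle Riemannian step: converting the equator hypothesis into information about the half-period function uniformly across the full range of parallels requires care, since a priori $h$-geodesics issuing from $\widetilde x$ carry Clairaut constants different from those of geodesics through equator points and so are not automatically controlled by the equator data alone.
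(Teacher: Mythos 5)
Your outer structure coincides with the paper's: transfer the hypothesis from $F$ to $h$ through the correspondence $\mathcal{P}(s)=\varphi(s,\gamma(s))$ of Proposition \ref{prop_F_cut} together with the flow-invariance of the equator, prove a purely Riemannian statement, and transfer back using the fact that the flow preserves each parallel, so the (continuously) rotated $h$-cut locus remains a subarc of $\{r=2a-r(\widetilde{x})\}$. Both transfer steps are sound and are exactly what the paper does.

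The genuine gap is the middle Riemannian step, which you yourself flag as the main obstacle. In the paper this step is not proved at all: it is precisely Theorem 3.5 of \cite{BCST} (the paper states that Theorem \ref{thm_F_cut_2} is the Randers generalization of that result), and the published proof consists of the two transfers plus this citation. Your sketch neither invokes that theorem nor substitutes an argument for it. Moreover, the mechanism you describe is the wrong one for this case: reflection across the meridian through $\widetilde{x}$ does pair each geodesic with its mirror image, but forcing their \emph{cut} points to lie on the opposite half-meridian is exactly the mechanism behind the meridian-type cut locus (case 1 of Theorem \ref{thm_F_cut_locus}); if it applied here it would contradict the conclusion you are after, since in the antipodal-parallel situation the cut point along a generic geodesic occurs before the first meeting with its mirror image and does not lie on $\{\theta=\pi\}$. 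Establishing that all cut points of $\widetilde{x}$ lie on $\{r=2a-r(\widetilde{x})\}$ requires the half-period/conjugate-point analysis of Lemmas 3.3--3.4 and Theorem 3.5 in \cite{BCST}: the equator hypothesis is converted into a monotonicity property of $\mathcal{H}$ on $(0,m(a))$, and one must then control geodesics issuing from $\widetilde{x}$, whose Clairaut constants range only over $(0,m(r(\widetilde{x})))$ --- exactly the uniformity issue you leave unresolved. As written the proposal is therefore incomplete; citing Theorem 3.5 of \cite{BCST} (or reproducing its proof) closes the hole, and with that citation your argument becomes the paper's.
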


This is a generalization of Theorem 3.5 in \cite{BCST} to the Randers case.

Here it is the structure of our paper.

We start by recalling the geometry of a Riemannian $2$-sphere of revolution and the structure of its cut locus (Section \ref{Riem_2_sph}). This section is an excerpt from \cite{ST}.

By using the navigation data $(h,W)$, where $h$ is the induced Riemannian metric on the $2$-sphere of revolution $M$, and $W:=\mu\cdot\frac{\partial}{\partial \theta}$ a mild wind blowing along the parallels, we construct in Section \ref{Ran_rota} a Randers rotational metric $F=\alpha+\beta$ on the $2$-sphere of revolution $M$. We determine the $F$-geodesic equations in Proposition \ref{F_Geodesic} and extend the Clairaut relation to $F$-geodesics. The conjugate and cut points along $F$-geodesics are obtained by mapping the conjugate and cut points along $h$-geodesics by means of the flow $\varphi$,  Propositions \ref{Prop_F_con} and \ref{prop_F_cut}, respectively.

Moreover, we show here that the flag curvature of this Randers metric coincide with the Gaussian curvature of $h$ (Lemma \ref{F_curvature}). Even though some of these results were proved already in \cite{HCS}, for a surface of revolution homeomorphic to $\R^2$, we show here how they extend to a $2$-sphere of revolution.

Section \ref{Proof of main theorem} is where we prove Theorem \ref{thm_F_cut_locus} by using a certain number of lemmas. Finally, in Section \ref{proof_thm_F_cut_2}, we prove Theorem \ref{thm_F_cut_2} and give some examples  of Randers rotational metrics whose Gaussian curvature is not monotone (Subsection \ref{ex_F_cut}).

We show that the convexity of the second derivative of the $F$-half period function different from the convexity of the $h$-half period function.

In a forthcoming research we will study the convexity of injectivity domain and other related topics of Randers rotational surface of revolution.

\begin{acknowledgement}
We express our gratitude to Prof. H. Shimada for many useful discussion and to Prof. M. Tanaka for this important suggestion.

The first author is grateful to Prof. P. Chitsakul for many years of supervision.
\end{acknowledgement}
\section{The 2-sphere of revolution}

\subsection{The Riemannian 2-sphere of revolution}\label{Riem_2_sph}


\quad A compact Riemannian manifold $(M,h)$ homeomorphic to a 2-sphere is called a {\it $2$-sphere of revolution} if $M$ admits a point $p$, called {\it pole}, such that for any two points $q_1,q_2$ on $M$ with $d_h(p,q_1)=d_h(p,q_2)$, there exists an $h$-isometry $f$ on $M$ satisfying $f(q_1)=q_2$, and $f(p)=p$,
where $d_h(\cdot,\cdot)$ denoted the $h$-Riemannian distance function on $M$.


Let $(r,\theta)$ denote geodesic polar coordinates around a pole $p$ of $(M,h)$. The Riemannian metric $้$ can be expressed as $h=dr^2+m^2(r)d\theta^2$ on $M\setminus\{p,q\}$, where $q$ denotes the unique $h$-cut point of $p$ and
\begin{equation*}
m(r(x)):=\sqrt{h\left(\left(\frac{\partial}{\partial\theta}\right)_x,\left(\frac{\partial}{\partial\theta}\right)_x\right)},
\end{equation*}
for any point $x\in M\setminus \{p,q\}$ with coordinates $(r(x),\theta(x))$ (see \cite{ST}).

It is known  that each pole of a 2-sphere of revolution $M$ has a unique cut point (see \cite{ST}, Lemma 2.1.). A pole and its unique cut point are called {\it a pair of poles}. 

From now, for the rest of the paper, we fix a pair of poles $p$, $q$ and the geodesic polar coordinates $(r,\theta)$ around $p$. 

\begin{remark}\label{rem_2_sphere_properties}
We always assume about $(M,h)$ the following conditions (as in \cite{ST}): 
\begin{itemize}
\item[1.] $M$ is symmetric with respect to the equator, i.e. reflection fixing $\{r=a\}$, where $d_h(p,q)=2a$. In other words, we assume 
$$
m(r)=m(2a-r),\quad \forall r\in (0,2a).
$$
\item[2.] The Gaussian curvature $G(x)=-\frac{m''(r(x))}{m(r(x))}$ of $(M,h)$ is monotone along the meridian from pole to the equator.
\end{itemize}
\end{remark}

We observe that both functions $m(r)$ and $m(2a-r)$ are extensible to a $C^\infty$ odd function around $\{r=0\}$ and $m'(0)=1=-m'(2a)$. 

Any periodic $h$-geodesic passing through a pair of poles is called a {\it meridian}, i.e. we have $\gamma(t)=\gamma(t+4a)$, for any $t\in \R$, and $p=\gamma(0)$. 

Any curve $r=c\in (0,2a)$ is called a 
{\it parallel}. The parallel $\{r=a\}$ is called the {\it equator} of $(M,h)$.

\begin{remark}
For the sake of simplicity we will often make use in the following of the Riemannian universal covering of $(M\setminus\{p,q\},dr^2+m(r)^2d\theta^2)$, namely 
\begin{equation*}
(\widetilde{M},\widetilde{h}):=((0,2a)\times\R,d\widetilde{r}^2+m(\widetilde{r})^2d\widetilde{\theta}^2),
\end{equation*}
with the covering projection
$\Pi:\widetilde{M}\to M\setminus\{p,q\}$.
\end{remark}

Recall that the equations of an $h$-unit speed geodesic $\gamma(s):=(r(s),\theta(s))$ of $(M,h)$ are
\begin{equation}\label{eq 4}
\begin{cases}
 \frac{d^2r}{ds^2}-mm'\left(\frac{d\theta}{ds}\right)^2=0\\ 
 \frac{d^2\theta}{ds^2}+2\frac{m'}{m}\left(\frac{dr}{ds}\right)\left(\frac{d\theta}{ds}\right)=0,
\end{cases}
\end{equation}
where $s$ is the arclength parameter of $\gamma$ with the $h$-unit speed parametrization condition 
\begin{equation}\label{eq 1}
\left(\frac{dr}{ds}\right)^2+m^2\left(\frac{d\theta}{ds}\right)^2 =1.
\end{equation}
It follows that every profile curve, or {\it meridian}, is an $h$-geodesic,
and that a parallel 
$\{r=r_0\}$ is geodesic, $r_0$ is constant, if and only if $m'(r_0)=0$.

We observe that  (\ref{eq 4}) implies
\begin{equation}\label{h-prime integral}
\frac{d\theta(s)}{ds}m^2(r(s)) = \nu,\quad \text{where $\nu$ is constant},
\end{equation}
that is, the quantity $\frac{d\theta}{ds}m^2$ is conserved along the $h$-geodesics. 

\begin{lemma}[The Clairaut relation]
Let $\widetilde{\gamma}(s)=(\widetilde{r}(s),\widetilde{\theta}(s))$ be an $\widetilde{h}$-unit speed geodesic on $(\widetilde{M},\widetilde h)$. There exists a constant $\nu$ such that 
\begin{equation}\label{Clair1}
m^2(\widetilde{r}(s))\widetilde{\theta}'(s)=m(\widetilde{r}(s))\cos\phi(s)=\nu
\end{equation}
hold for any $s$, where $\phi(s)$ denotes the angle between tangent vector of $\widetilde\gamma(s)$ and $\frac{\partial}{\partial \widetilde{\theta}}|_{ \widetilde\gamma(s)}$ (see Figure \ref{Clairaut}). The constant $\nu$ is called the {\it Clairaut constant} of $\widetilde{\gamma}$.
\end{lemma}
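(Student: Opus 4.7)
The plan is to prove both equalities in (\ref{Clair1}) by a short direct computation, using the conservation law already recorded in (\ref{h-prime integral}) and the definition of the angle $\phi(s)$ in a Riemannian metric.

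First I would note that the first equality $m^2(\widetilde r(s))\widetilde\theta'(s)=\nu$ is essentially a restatement of (\ref{h-prime integral}) on the universal cover. Indeed, the metric $\widetilde h = d\widetilde r^2 + m(\widetilde r)^2 d\widetilde\theta^2$ has the same local form as $h$, so the covering projection $\Pi$ is a local isometry and the Euler--Lagrange equations (\ref{eq 4}) hold verbatim for $\widetilde\gamma(s)=(\widetilde r(s),\widetilde\theta(s))$. The second of these equations can be rewritten as
\[
\frac{d}{ds}\bigl(m^2(\widetilde r(s))\,\widetilde\theta'(s)\bigr)=0,
\]
so $m^2(\widetilde r(s))\widetilde\theta'(s)$ is a constant $\nu$ along $\widetilde\gamma$.

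Next I would establish the second equality from the very definition of the angle $\phi(s)$. Since $\widetilde\gamma$ is parametrized by $\widetilde h$-arclength, $\|\widetilde\gamma'(s)\|_{\widetilde h}=1$, while the diagonal form of the metric gives $\|\partial/\partial\widetilde\theta\|_{\widetilde h}=m(\widetilde r(s))$ and $\widetilde h(\widetilde\gamma'(s),\partial/\partial\widetilde\theta)=m^2(\widetilde r(s))\widetilde\theta'(s)$. Hence
\[
\cos\phi(s)=\frac{\widetilde h\bigl(\widetilde\gamma'(s),\partial/\partial\widetilde\theta\bigr)}{\|\widetilde\gamma'(s)\|_{\widetilde h}\,\|\partial/\partial\widetilde\theta\|_{\widetilde h}}=\frac{m^2(\widetilde r(s))\widetilde\theta'(s)}{m(\widetilde r(s))}=m(\widetilde r(s))\widetilde\theta'(s),
\]
and multiplying by $m(\widetilde r(s))$ gives $m(\widetilde r(s))\cos\phi(s)=m^2(\widetilde r(s))\widetilde\theta'(s)=\nu$, which is (\ref{Clair1}).

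There is no genuine obstacle here; the only point that deserves a line is the transfer of (\ref{eq 4}) and (\ref{h-prime integral}) from $M\setminus\{p,q\}$ to $(\widetilde M,\widetilde h)$, which is automatic from the fact that $\Pi$ is a local isometry, so the geodesic equations and their first integrals pull back unchanged.
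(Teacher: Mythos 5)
Your proof is correct and follows essentially the same route the paper takes: the constancy of $m^2(\widetilde r(s))\widetilde\theta'(s)$ is exactly the first integral (\ref{h-prime integral}) derived from the geodesic equations (\ref{eq 4}), and the identification $m(\widetilde r(s))\cos\phi(s)=m^2(\widetilde r(s))\widetilde\theta'(s)$ is the standard computation of the angle using $\|\widetilde\gamma'\|_{\widetilde h}=1$, $\|\partial/\partial\widetilde\theta\|_{\widetilde h}=m(\widetilde r)$ and the diagonal form of $\widetilde h$. Your remark that everything transfers to $(\widetilde M,\widetilde h)$ because $\Pi$ is a local isometry is a correct and sufficient justification of the only point the paper leaves implicit.
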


\begin{figure}
\setlength{\unitlength}{0.5cm}
\centering
\begin{picture}(10,10)
\put(1,0.5){\vector(0,1){9}}
\put(0.5,1){\vector(1,0){9.5}}
\put(0.5,9){\vector(1,0){9.5}}
\multiput(0.5,5)(0.5,0){19}{\line(1,0){0.3}}
\put(10,5){\vector(1,0){0.1}}
\put(5,0.5){\line(0,1){9}}
\put(9,0.5){\line(0,1){9}}
\put(10.1,0.8){$\widetilde{\theta}$}
\put(0.7,9.5){$\widetilde{r}$}
\put(0.4,0.3){$\widetilde{p}$}
\put(0.4,8.2){$\widetilde{q}$}
\put(0.2,4.2){$\widetilde{p}_0$}
\put(5.1,4.2){$\widetilde{q}_0$}
\put(9.1,4.2){$\widetilde{p}_1$}
\put(0,4.9){$a$}
\put(-0.4,8.9){$2a$}
\put(0,0.9){$0$}
\put(1,0){$0$}
\put(5,0){$\pi$}
\put(9,0){$2\pi$}
\put(2.8,5.2){$\phi$}
\put(3.8,8){$\widetilde{\gamma}$}
\put(4,6.3){$\dot{\widetilde{\gamma}}$}
\qbezier(1,5)(4,6)(4.5,8.7)
\put(4.45,8.5){\vector(1,3){0.1}}
\put(1,5){\vector(3,1){4}}

\qbezier(2.4,5.5)(2.6,5.3)(2.4,5)

\end{picture}
      \caption{The angle $\phi$ between tangent vector of $\widetilde{\gamma}$ and $\frac{\partial}{\partial \widetilde{\theta}}|_{ \widetilde\gamma(s)}$.}
      \label{Clairaut}
\end{figure}
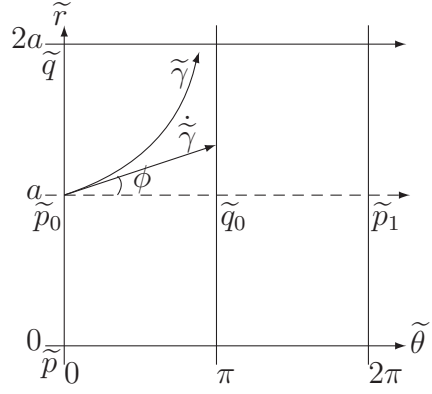

\begin{remark}\label{h-Clairaut particular cases}
\begin{enumerate}
\item Usually, a geodesic 
$\widetilde{\gamma}:[0,l]\to M$, $l>0$,
is determined by its starting point $\widetilde{p}_0\in\widetilde{M}$ and initial velocity  $v:=\widetilde{\gamma}(0)\in T_{\widetilde{p}_0}M$. However, from the Clairaut relation above one can see that this is equivalent to characterize geodesics by the initial point $\widetilde{p}_0$ and Clairaut constant $\nu$. It is customary to use the notation $\widetilde{\gamma}^{\widetilde{p}_0}_{\nu}$.
\item Let $\widetilde{p}_0\in\{\widetilde{r}=a\}$ be a point on the equator, and let $\widetilde\gamma^{\widetilde{p}_0}_{\nu}(s)=(\widetilde r(s),\widetilde \theta(s))$ be the $\widetilde{h}$-geodesic from $\widetilde{p}_0$ with Clairaut constant $\nu$. Observe that 
\begin{enumerate}
\item if $\nu=0$, then $\phi=\pm\frac{\pi}{2}$ and  $\widetilde\gamma^{\widetilde{p}_0}_{0}$ is a meridian, i.e. $\frac{d\widetilde\theta(s)}{ds}=0$;
\item if $\nu=m(a)$, then $\phi=0$ and  $\widetilde\gamma^{\widetilde{p}_0}_{m(a)}$ is a parallel, namely the equator in this case, i.e. $\frac{d\widetilde r(s)}{ds}=0$;
\item if $\nu\in (0,m(a))$, then $\phi\in (-\frac{\pi}{2},\frac{\pi}{2})\setminus\{0\}$, and hence the geodesic  $\widetilde\gamma^{\widetilde{p}_0}_{\nu}(s)=(\widetilde r(s),\widetilde \theta(s))$ is neither a meridian nor a parallel and $\frac{d\widetilde\theta(s)}{ds}>0$.
\end{enumerate}
\end{enumerate}

\end{remark}

By combining the Clairaut relation with (\ref{eq 1}) it follows that the tangent vector along the unit $\widetilde{h}$-geodesic  $\widetilde\gamma^{\widetilde{p}_0}_{\nu}$ has the components
\begin{equation*}
\begin{split}
& \frac{d\widetilde r(s)}{ds}=\pm \sqrt{1-\frac{\nu^2}{m^2(\widetilde r(s))}},\quad   \frac{d\widetilde\theta(s)}{ds}=\frac{\nu}{m^2(\widetilde r(s))}.
\end{split}
\end{equation*}

If we assume $\frac{d\widetilde r(s)}{ds}\neq 0$, for all $s$ in some interval $(s_1,s_2)$, i.e. our geodesic $\widetilde\gamma^{\widetilde{p}_0}_{\nu}$ is not tangent to a parallel, then it follows
\begin{equation}
\widetilde \theta(s_2)-\widetilde \theta(s_1)=\textrm{sign} \frac{d\widetilde r(s)}{ds} \int_{\widetilde r(s_1)}^{\widetilde r(s_2)} \frac{\nu}{m(\tau)\sqrt{m^2(\tau)-\nu^2}}d\tau,
\end{equation}
where $\textrm{sign} \frac{d\widetilde r(s)}{ds} $ is the sign of the component $\frac{d\widetilde r(s)}{ds}$ of the tangent vector. Indeed, if the $\widetilde{h}$-geodesic is not a parallel, then the theorem of implicit functions allows us to write locally $\widetilde\gamma^{\widetilde{p}_0}_{\nu}$ as $\widetilde \theta=\widetilde \theta(\widetilde r)$, for $\widetilde r\in (\widetilde r(s_1),\widetilde r(s_2))$, with the tangent vector 
\begin{equation}\label{dtheta/dr}
\frac{d\widetilde\theta}{d\widetilde r}=\textrm{sign} \frac{d\widetilde r(s)}{ds}
 \frac{\nu}{m(\widetilde r)\sqrt{m^2(\widetilde r)-\nu^2}}.
\end{equation}

Likewise, the $\widetilde{h}$-length of such an $\widetilde\gamma^{\widetilde{p}_0}_{\nu}|_{(s_1,s_2)}$ is given by 
\begin{equation}
\mathcal L_{\widetilde{h}}(\widetilde\gamma^{\widetilde{p}_0}_{\nu}|_{(s_1,s_2)})=\textrm{sign} \frac{d\widetilde r(s)}{ds} \int_{\widetilde r(s_1)}^{\widetilde r(s_2)} 
\frac{m(\tau)}{\sqrt{m^2(\tau)-\nu^2}}d\tau,
\end{equation}
and taking into account the obvious identity
$$
\frac{m(\tau)}{\sqrt{m^2(\tau)-\nu^2}}=
\frac{\sqrt{m^2(\tau)-\nu^2}}{m(\tau)}
+\frac{\nu^2}{m(\tau)\sqrt{m^2(\tau)-\nu^2}},
$$
it follows
\begin{equation}
\mathcal L_{\widetilde{h}}(\widetilde\gamma^{\widetilde{p}_0}_{\nu}|_{(s_1,s_2)})=\textrm{sign} \frac{d\widetilde r(s)}{ds} \int_{\widetilde r(s_1)}^{\widetilde r(s_2)} 
\frac{\sqrt{m^2(\tau)-\nu^2}}{m(\tau)}
d\tau+\nu[\widetilde \theta(s_2)-\widetilde \theta(s_1)].
\end{equation}

Let us assume that $\widetilde \gamma_\nu^{\widetilde{p}_0}(s)=(\widetilde r(s),\widetilde \theta(s))$ is an $\widetilde{h}$-unit speed geodesic from $\widetilde  p_0$, $\{\widetilde r(\widetilde p_0)=a\}$, $\{\widetilde \theta(\widetilde p_0)=0\}$, such that  $\nu\in (0,m(a))$, and 
$\frac{d\widetilde r(s)}{ds}|_{s=0}<0$.
 From Clairaut relation it follows that  $\widetilde{\gamma}^{\widetilde{p}_0}_{\nu}$ must be tangent to the parallel $\{\widetilde r=\xi(\nu)\}$ at a point $\widetilde{\gamma}^{\widetilde{p}_0}_{\nu}(t_1)$ and 
 and return to the equator at $\widetilde{p}_1=\widetilde{\gamma}^{\widetilde{p}_0}_{\nu}(t_0)$, where 
 $$
 t_0=\min\{t>0\ :\ \widetilde r(t)=a\}.
 $$
 Observe that here $\xi:(0,m(a))\to \R$ is the inverse function of $m:[0,b)\to \R$, where $b$ is the smallest value such that $m'|_{[0,b)]}>0$. \\
 
On the universal covering, we can see that 
\begin{equation}\label{half period condition}
\widetilde{\theta}(t_0)-\widetilde{\theta}(0)=2(\widetilde{\theta}(t_0)-\widetilde{\theta}(t_1))
\end{equation}
By integrating \eqref{dtheta/dr} with condition \eqref{half period condition}, it follows (see \cite{BCST}):

\begin{lemma}[Half period function of Riemannian two-sphere of revolution]\label{lemma_h_half}
Let $\widetilde{\gamma}^{\widetilde{p}_0}_{\nu}$ be a $\widetilde{h}$-unit speed geodesic, where $\widetilde{p}_0\in\{\widetilde{r}=a\}$ and $\nu\in(0,m(a))$, i.e. $\widetilde{p}_0$ is a point on equator and $\widetilde{\gamma}^{\widetilde{p}_0}_{\nu}$ is neither meridian nor parallel (equator) (see Figure \ref{lem_h_half}). The $\widetilde{h}$-distance from $\widetilde{p}_0$ to $\widetilde{\gamma}^{\widetilde{p}_0}_{\nu}(t_0)$ is given by the $\widetilde{h}$-half period function 
\begin{equation}\label{h_Half}
\mathcal{H}:(0,m(a))\to \R,\qquad 
\mathcal{H}(\nu)=2\int_{\xi(\nu)}^a\frac{\nu}{m(\tau)\sqrt{m(\tau)^2-\nu^2}}d\tau,
\end{equation}
where $\widetilde{r}=\xi(\nu)$ is the parallel tangent to $\widetilde{\gamma}^{\widetilde{p}_0}_{\nu}(t_0)$.
\end{lemma}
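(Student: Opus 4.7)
The plan is to integrate equation \eqref{dtheta/dr} over the two monotone pieces of $\widetilde r(s)$ between consecutive equator crossings and then exploit the reflection symmetry \eqref{half period condition}. With initial data $\widetilde r(0)=a$, $\widetilde\theta(0)=0$, $\nu\in(0,m(a))$, and $\frac{d\widetilde r}{ds}\big|_{0}<0$, the Clairaut relation \eqref{Clair1} forces $m(\widetilde r(s))\geq\nu$ along the entire geodesic, with equality precisely where $\phi=0$, i.e.\ at a parallel of tangency. Since $\xi:(0,m(a))\to[0,b)$ is the inverse of the strictly increasing restriction $m|_{[0,b)}$, the function $\widetilde r(s)$ decreases from $a$ down to $\xi(\nu)$ at some first time $t_1>0$, and by the geodesic ODEs \eqref{eq 4} it then increases back to $a$ at some first later time $t_0$.

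On $[0,t_1]$ one has $\textrm{sign}(d\widetilde r/ds)=-1$, so \eqref{dtheta/dr} gives
$$
\widetilde\theta(t_1)-\widetilde\theta(0)=-\int_{a}^{\xi(\nu)}\frac{\nu\,d\tau}{m(\tau)\sqrt{m^2(\tau)-\nu^2}}=\int_{\xi(\nu)}^{a}\frac{\nu\,d\tau}{m(\tau)\sqrt{m^2(\tau)-\nu^2}}.
$$
On $[t_1,t_0]$ one has $\textrm{sign}(d\widetilde r/ds)=+1$ with $\widetilde r$ ranging over the same interval, so the identical integral gives $\widetilde\theta(t_0)-\widetilde\theta(t_1)$. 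Summing the two contributions produces the symmetry identity \eqref{half period condition} together with the claimed closed form $\widetilde\theta(t_0)=\mathcal H(\nu)$.

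The main technical point is convergence of the improper integral at $\tau=\xi(\nu)$, where $m(\xi(\nu))=\nu$. Strict monotonicity on $[0,b)$ yields $m'(\xi(\nu))>0$, hence $m^2(\tau)-\nu^2\sim 2\nu m'(\xi(\nu))(\tau-\xi(\nu))$ as $\tau\to\xi(\nu)^+$, so the integrand is $O\bigl((\tau-\xi(\nu))^{-1/2}\bigr)$ and the integral converges. I would also flag a mild abuse of language in the statement: the quantity $\mathcal H(\nu)$ is the angular change $\widetilde\theta(t_0)$, so the actual $\widetilde h$-length of the equator arc from $\widetilde p_0$ to $\widetilde\gamma^{\widetilde p_0}_{\nu}(t_0)$ is $m(a)\,\mathcal H(\nu)$; the term ``half period'' refers to this angular half-period between consecutive equator crossings.
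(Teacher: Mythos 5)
Your proposal is correct and takes essentially the same route as the paper, which obtains the lemma by integrating \eqref{dtheta/dr} between the tangency parallel $\{\widetilde r=\xi(\nu)\}$ and the equator and doubling via the symmetry \eqref{half period condition}; your two-leg computation reproduces exactly this (and additionally supplies the convergence check at $\tau=\xi(\nu)$, which the paper leaves implicit with a reference to \cite{BCST}). Your closing remark is also fair: as in \cite{BCST}, $\mathcal H(\nu)$ is really the angular advance $\widetilde\theta(t_0)-\widetilde\theta(0)$ rather than literally an $\widetilde h$-distance, a looseness in the statement rather than in the argument.
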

\begin{figure}[H]
\setlength{\unitlength}{0.5cm}
\centering
\begin{picture}(10,10)
\put(1,0.5){\vector(0,1){9}}
\put(0.5,1){\vector(1,0){9.5}}
\put(0.5,9){\vector(1,0){9.5}}
\multiput(0.5,5)(0.5,0){19}{\line(1,0){0.3}}
\put(10,5){\vector(1,0){0.1}}
\put(10.1,0.8){$\widetilde{\theta}$}
\put(0.7,9.5){$\widetilde{r}$}
\put(0.4,0.2){$\widetilde{p}$}
\put(0.4,8.2){$\widetilde{q}$}
\put(5.1,4.2){$\widetilde{q}_0$}
\put(-3.2,4.9){$\widetilde{p}_0=(0,a)$}
\put(10.2,4.9){$\widetilde{p}_1=(2\pi,\widetilde{\gamma}_\nu^{\widetilde{p}_0}(t_0))$}
\put(-0.4,8.9){$2a$}
\put(0,0.9){$0$}
\put(1,0){$0$}
\put(5,0){$\pi$}
\put(9,0){$2\pi$}
\put(5.1,2){$\widetilde{\gamma}_\nu^{\widetilde{p}_0}(t_1)$}
\put(1.1,5.3){$\widetilde{\gamma}_\nu^{\widetilde{p}_0}(0)$}

\put(-1,2.85){$\xi(\nu)$}

\qbezier(1,5)(5,1)(9,4.9)
\put(8.9,4.7){\vector(1,3){0.1}}
\multiput(0.5,2.95)(0.5,0){19}{\line(1,0){0.3}}
\multiput(5,0.5)(0,0.5){19}{\line(0,1){0.3}}
\multiput(9,0.5)(0,0.5){19}{\line(0,1){0.3}}
\put(5.5,0.25){\vector(1,0){3.5}}
\put(4.5,0.25){\vector(-1,0){3.1}}
\end{picture}
\bigskip
      \caption{The Riemannian half period function $\mathcal{H}(\nu)$.}
      \label{lem_h_half}
\end{figure}
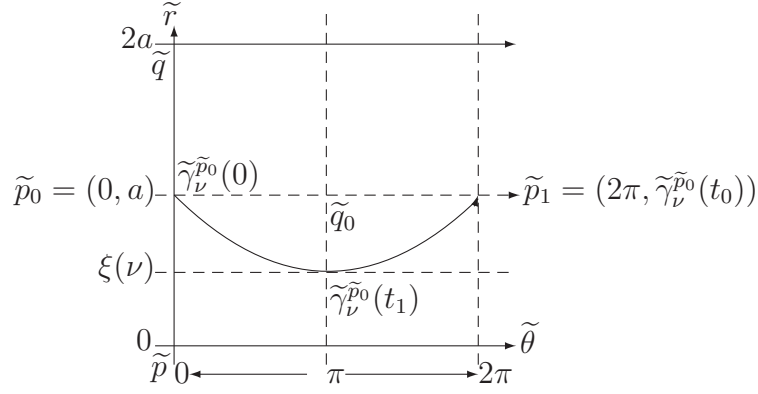

Let $\widetilde{p}_0\in \widetilde{M}$ and $\widetilde{\beta}_\nu(s)$ and $\widetilde{\gamma}_\nu(s)$ for any $\nu\in(0,m(\widetilde{r}(\widetilde{p}_0)))$ denote the geodesic emanating from $\widetilde{p_0}$ with $(\widetilde{r}\circ \widetilde{\beta_\nu})'(0)\geq 0$ and $(\widetilde{r}\circ \widetilde{\gamma_\nu})'(0)\leq 0$. 

Since both geodesics $\widetilde{\beta}_\nu(s)$ and $\widetilde{\gamma}_\nu(s)$
depend smoothly on $\nu\in (0,m(a))$ we obtain two geodesic variations such that all curves in the variation are geodesics. 


We obtain the $h$-Jacobi fields $X_\nu(t)$ and $Y_\nu(t)$ :
\begin{equation*}
X_\nu(t):=\frac{\partial}{\partial \nu} (\widetilde{\beta}_\nu(t)),\quad Y_\nu(t):=\frac{\partial}{\partial \nu} (\widetilde{\gamma}_\nu(t)),
\end{equation*}
along $\widetilde{\beta}_\nu(t)$ and $\widetilde{\gamma}_\nu(t)$. We can see that $X_\nu(0)=Y_\nu(0)=0$.

Let us denote by $\widetilde p_u$ the point of coordinates $(\widetilde r(\widetilde p_u),\widetilde \theta(\widetilde p_u))=(u,0)$, where $u\in (0,2a)$ and $\nu\in (0,m(a))$. Similarly with the case $u=a$, discussed above, for any $\nu \in (0,m(u))$, we consider the $\widetilde{h}$-geodesic $\widetilde \gamma_\nu^{u}$ emanating from $\widetilde p_u$, with Clairaut constant $\nu$ and $(\widetilde{r}\circ \widetilde{\gamma}_\nu^u)'(0)\leq 0$.

The geodesic $\widetilde \gamma_\nu^{u}$ is tangent to the parallel $\{\widetilde r=\xi(u)\}$ at a point $\widetilde \gamma_\nu^{u}(s_0)$, will intersect the equator and then will be tangent to the parallel $\{\widetilde r=2a-\xi(u)\}$ at a point $\widetilde \gamma_\nu^{u}(s_1)$. Clearly, the parameter values $s_0$ and $s_1$ are solutions of the equation $(\widetilde{r}\circ \widetilde{\gamma}_\nu^u)'(s)=0$. Then it is known from the proof of Lemma 2.9 in 
\cite{ST}, or Proposition 7.2.3 in \cite{SST}, that the Jacobi vector field $Y_\nu$ along $\widetilde{\gamma}_\nu^u$ is given by
\begin{equation}\label{h-Jacobi}
Y_\nu(s)=\frac{\partial \widetilde \theta}{\partial \nu}(\widetilde r(s),u,\nu)\left[ 
-\nu\frac{m(\widetilde r(s))}{\sqrt{m^2(\widetilde r(s))-\nu^2}}\left(\frac{\partial}{\partial \widetilde r}\right)_{\widetilde{\gamma}_\nu^u(s)}+
\left(\frac{\partial}{\partial \widetilde \theta}\right)_{\widetilde{\gamma}_\nu^u(s)}
\right].
\end{equation}

Pay attention to the fact that we are using here the parametrization $\widetilde \theta=\widetilde \theta(\widetilde r(s),u,\nu)$ explained above. Some straightforward computations show that $\widetilde \theta(\widetilde r,u,\nu)$ given by 
\begin{equation}
\widetilde \theta(\widetilde r,u,\nu)=\mathcal H(\nu)-\int_{\widetilde r}^{2a-u}
\frac{\nu}{m(\tau)\sqrt{m^2(\tau)-\nu^2}}d\tau,
\end{equation}
where $\mathcal H$ is the Riemannian half-period function in Lemma \ref{lemma_h_half}.

A point $q_0:=\widetilde \gamma_\nu(l)$, where $\nu\in(-m(\widetilde{r}(p_0)),m(\widetilde{r}(p_0)))$ and $l>0$, is {\it $\widetilde{h}$-conjugate} to $\widetilde{p}_0$ along $\widetilde \gamma_\nu(t)$ if and only if $Y_\nu(l)=0$, and taking into account that  $\Bigl(\frac{\partial}{\partial \widetilde r}\Bigr)_{\widetilde{\gamma}_\nu^u(s)},
\Bigl(\frac{\partial}{\partial \widetilde \theta}\Bigr)_{\widetilde{\gamma}_\nu^u(s)}$ are linear independent vectors on $T_{\widetilde{\gamma}_\nu^u(s)} M$, one obtains the differential equation 
\begin{equation}
\frac{\partial \widetilde \theta}{\partial \nu}(\widetilde r,u,\nu)=0
\end{equation}
along $\widetilde{\gamma}_\nu^u(s)$.

It can be shown that this differential equation has a unique solution 
$\widetilde r(s_c, \nu, u)$ that is the $\widetilde r$ coordinate of the $\widetilde{h}$-first conjugate point of $\widetilde p_u$ on 
$\widetilde{\gamma}_\nu^u(s)$.

The $\widetilde \theta$ coordinate of the $\widetilde{h}$-first conjugate point is obtained by substitution $\widetilde \theta(s_c, u, \nu)=\widetilde \theta(\widetilde r(s_c, u, \nu),u, \nu)$.

\begin{definition}
Let $\gamma:[0,t_0]\to M$ be a minimal $h$-geodesic segment on a complete Riemannian manifold $(M,h)$. The endpoint $\gamma(t_0)$ of the geodesic segment is called a {\it $h$-cut point} of $p:=\gamma(0)$ along $\gamma$ if any extended geodesic segment $\gamma^*:[0,t_1]\to M$ of $\gamma$, where $t_1>t_0$, is not a minimizing arc joining $p$ to $\gamma^*(t_1)$ anymore. The {\it $h$-cut locus} $\mathcal{C}^h_p$ of the point $p$ is defined by the set of the cut points along all geodesics segments emanating from $p$.
\end{definition}

The structure of the $h$-cut locus $\mathcal{C}^h_p$ of $(M,h)$ was obtained in \cite{ST}.
\begin{theorem}[\cite{ST}]\label{thm_h_cut_locus}
Let $(M,dr^2+m(r)^2d\theta^2)$ be a $2$-sphere of revolution with a pair of poles $p,q$ and satisfying properties in Remark \ref{rem_2_sphere_properties}. Then the $h$-cut locus of a point $x\in M \setminus\{p,q\}$ with $\{\theta(x)=0\}$ is

\begin{itemize}
\item[1.] The antipodal point $\mathcal{C}^h_x=(2a-r(x),\pi)$, when $G(x)$ is a positive constant.
\item[2.] A subarc of the opposite half meridian $\mathcal{C}^h_x\subset\{\theta=\pi\}$, when $G(x)$ is monotone non-increasing along meridian from the pole $p$ to the point on $\{r=a\}$.
\item[3.] A subarc of the antipodal parallel $\{r=2a-r(x)\}$, that is $\mathcal{C}^h_x=r^{-1}(2a-r(x))\cap\theta^{-1}(\mathcal{H}(m(r(x))), 2\pi-\mathcal{H}(m(r(x))))$, when $G(x)$ is monotone non-decreasing along meridian from the pole $p$ to the point on $\{r=a\}$.
\end{itemize} 
\end{theorem}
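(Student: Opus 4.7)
The plan is to follow the symmetry-and-half-period approach that is standard for rotational surfaces. First I would exploit the reflection $\sigma:(r,\theta)\mapsto(r,-\theta)$, which is an $h$-isometry fixing $x$ and hence $\mathcal C_x^h$; combined with the axial rotation placing $\theta(x)=0$, this reduces the problem to studying, for each Clairaut constant $\nu\in(0,m(r(x)))$, the pair of $\widetilde h$-geodesics $\widetilde\gamma_{\pm\nu}$ lifting the geodesics from $x$ that are interchanged by $\sigma$. By the reflection these two arcs must meet at a first point $y(\nu)\in\widetilde M$ above $\widetilde x$, and I would check by a Klingenberg-type minimality argument that the projection $\Pi(y(\nu))$ is simultaneously the $h$-cut point of $x$ along both geodesics, and that every point of $\mathcal C_x^h$ is obtained in this way, with the boundary values $\nu\to 0^+$ and $\nu\to m(r(x))^-$ contributing the endpoints on the opposite meridian or on the antipodal parallel.

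Next I would extract explicit coordinates for $y(\nu)$ from the parametrization
\[
\widetilde\theta(\widetilde r,r(x),\nu)=\mathcal H(\nu)-\int_{\widetilde r}^{2a-r(x)}\frac{\nu\,d\tau}{m(\tau)\sqrt{m^2(\tau)-\nu^2}}
\]
established just above. The radial coordinate $\widetilde r(\nu)$ of $y(\nu)$ is the unique solution of $\widetilde\theta(\widetilde r,r(x),\nu)=\pi$, so the shape of $\mathcal C_x^h$ is controlled by how the pair $(\widetilde r(\nu),\widetilde\theta(\nu))$ moves as $\nu$ varies: item 2 of the theorem corresponds to $\widetilde\theta(\nu)\equiv\pi$ on the whole range, while item 3 corresponds to $\widetilde r(\nu)\equiv 2a-r(x)$ on the whole range, with azimuthal endpoints $\mathcal H(m(r(x)))$ and $2\pi-\mathcal H(m(r(x)))$ coming from the tangential limits.

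The main obstacle, and the actual content of the theorem, is linking the monotonicity of $G=-m''/m$ along the meridian to the monotonic behaviour of this pair. I would differentiate \eqref{h_Half} and the integral above and, using integration by parts together with $m''=-Gm$, obtain expressions for $\mathcal H'(\nu)$ and for the variation of the meeting point whose sign is governed by the sign of $G(\tau)-G(\xi(\nu))$ on the integration interval, a Sturm-type comparison along the meridian. This forces $\mathcal H'(\nu)\le 0$ when $G$ is non-increasing on $[0,a]$ and $\mathcal H'(\nu)\ge 0$ when $G$ is non-decreasing, which, via the symmetry across $\{r=a\}$ and across the opposite meridian, translates directly into items 2 and 3. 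Item 1 is then immediate: constant positive curvature gives the round sphere, on which $\mathcal H(m(r(x)))=\pi$ for every $r(x)$, so the parallel subarc of item 3 collapses to the single antipodal point $(2a-r(x),\pi)$. I expect the curvature-to-half-period derivative identity and the verification that $\Pi(y(\nu))$ is really a cut point rather than merely a conjugate or later intersection point to be the delicate steps; everything else amounts to explicit integration along the Clairaut parameter.
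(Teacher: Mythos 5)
This theorem is not proved in the paper at all: it is quoted verbatim from Sinclair--Tanaka \cite{ST}, and the paper only uses it as input in Section \ref{Proof of main theorem}. So your proposal has to be measured against the known proof strategy, and as it stands it has two concrete defects beyond the admitted delicate steps. First, the symmetry reduction is the wrong one for item 3. The only isometry you invoke that fixes $x$ is the reflection $\sigma:\theta\mapsto-\theta$, and the first meeting point of the $\sigma$-paired geodesics $\gamma_{\pm\nu}$ is forced onto the fixed axis $\{\theta=0\}\cup\{\theta=\pi\}$ (off-axis intersections occur in $\sigma$-pairs at swapped parameter values). Hence this pairing can only ever produce cut points on the opposite half meridian, never the arc of the antipodal parallel in item 3, whose points satisfy $\theta\neq\pi$ except at its midpoint. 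For those cut points the second minimizing geodesic is the one with the \emph{same} Clairaut constant but opposite sign of $r'(0)$ (the pair $\widetilde{\beta}_\nu$, $\widetilde{\gamma}_\nu$ of Section \ref{Riem_2_sph}), and since for $r(x)\neq a$ no reflection fixes both $x$ and such a cut point, this case genuinely needs the separate conjugate-point/half-period analysis of \cite{ST} (cf. also Theorem 3.5 of \cite{BCST}). Your own bookkeeping shows the problem: you define $\widetilde r(\nu)$ as the solution of $\widetilde\theta(\widetilde r,r(x),\nu)=\pi$ and then say item 3 corresponds to $\widetilde r(\nu)\equiv 2a-r(x)$; but that would mean every geodesic from $x$ passes through the single point $(2a-r(x),\pi)$, which characterizes the constant-curvature case of item 1, not item 3.

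Second, the key analytic claim is stated with the sign inverted: you assert $\mathcal{H}'\ge 0$ when $G$ is non-decreasing, whereas the correct transfer is that $G$ non-decreasing from pole to equator forces $\mathcal{H}$ non-increasing (and $G$ non-increasing forces $\mathcal{H}\ge\pi$). This is exactly what the paper states at the beginning of Section \ref{proof_thm_F_cut_2} and what its examples in Subsection \ref{ex_F_cut} exhibit, e.g.\ $m_\lambda(r)=\sin r/\sqrt{1-\lambda\sin^2 r}$ with $G_\lambda$ monotone non-decreasing for $\lambda\in(0,\tfrac23)$ and $\mathcal{H}'(\nu)=-\pi\lambda(1+\lambda\nu^2)^{-3/2}<0$. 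A quick consistency check also settles the sign: $\mathcal{H}(0^+)=\pi$, and for the arc $\theta^{-1}\bigl(\mathcal{H}(m(r(x))),2\pi-\mathcal{H}(m(r(x)))\bigr)$ of item 3 to be a nondegenerate arc containing $\theta=\pi$ one needs $\mathcal{H}(m(r(x)))<\pi$, i.e.\ $\mathcal{H}$ decreasing. With your sign convention the two monotonicity hypotheses get matched to the wrong cut-locus structures. Finally, even with the sign corrected, monotonicity of $\mathcal{H}$ alone does not finish the argument: one still must locate first conjugate points (the endpoints of the cut arc) and rule out conjugate points before the candidate intersection, which in \cite{ST} is where the curvature monotonicity enters through Jacobi-field comparison rather than only through \eqref{h_Half}. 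So the outline is in the right spirit (Clairaut constant, half-period function, symmetry), but the central mechanism as proposed would not yield items 2 and 3 correctly.
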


\subsection{Randers rotational metrics}\label{Ran_rota}

In a previous paper \cite{HCS} we have constructed a Randers rotational metric on a surface of revolution homeomorphic to $\R^2$. We will construct a Randers rotational metric on a 2-sphere of revolution in a similar manner in the following. 

Let $(M,h)$ be the $2$-sphere of revolution considered in the previous section. 
Observe that there exists a constant $\mu<\{\frac{1}{\max\{m(r)\}}:r\in[0,2a]\}$, such that $\mu<\frac{1}{m(r)}$ for any $r\in [0,2a]$.

\begin{proposition}\label{Prop_Randers_metric}
If $(M,h)$ is a surface of revolution 
and $W=\mu\cdot\frac{\partial}{\partial \theta}$ is a breeze on $M$ blowing along parallels, then the Randers metric $(M,F=\alpha+\beta)$ obtained by the Zermelo's navigation process with data $(h,W)$ is a Finsler metric 
on $M$, where $\alpha=\sqrt{a_{ij}(x)y^iy^j}$, $\beta=b_i(x)y^i$ are defined by
\begin{equation}\label{Randers_metrics}
(a_{ij})=\left(
\begin{array}{cc}
\frac{1}{1-{\mu}^2m^2} & 0\\
0 & \frac{m^2}{\left(1-{\mu}^2m^2\right)^2}
\end{array}\right)
\text{, }
b_i=\left(
\begin{array}{c}
0 \\
-\frac{\mu m^2}{1-{\mu}^2m^2}
\end{array}\right),\quad i,j=1,2.
\end{equation}
\end{proposition}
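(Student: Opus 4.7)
The plan is to apply the general Zermelo navigation formulas recalled in the Introduction,
\[
a_{ij}=\frac{\lambda\,h_{ij}+W_iW_j}{\lambda^2},\qquad b_i=-\frac{W_i}{\lambda},\qquad \lambda:=1-\Vert W\Vert_h^2,
\]
directly to the present data $h = dr^2+m^2(r)\,d\theta^2$ and $W=\mu\,\frac{\partial}{\partial\theta}$. First I would lower the index on $W$: from $W^1=0$, $W^2=\mu$ together with the diagonal form of $h$ one reads off $W_1=0$ and $W_2=\mu\,m^2(r)$, so that $\Vert W\Vert_h^2 = h_{ij}W^iW^j = \mu^2 m^2(r)$ and $\lambda = 1-\mu^2 m^2(r)$.

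Substituting these expressions into the formulas above, the off-diagonal entry of $a_{ij}$ vanishes automatically, $a_{11}=1/\lambda$, and the numerator of $a_{22}$ simplifies as $\lambda\,m^2+W_2^2 = m^2(1-\mu^2m^2)+\mu^2 m^4 = m^2$, yielding $a_{22}=m^2/\lambda^2$. Similarly $b_1=0$ and $b_2=-\mu\,m^2/\lambda$, which is precisely (\ref{Randers_metrics}).

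It then remains to check that the pair $(\alpha,\beta)$ so obtained gives a bona fide Finsler metric $F=\alpha+\beta$, i.e. that properties (i)--(iii) of the Introduction hold. By the standard theory cited via \cite{BCS,BR,R,S}, this reduces to the pointwise strict inequality $\Vert W\Vert_h<1$, which in our setting is $\mu\,m(r)<1$ for every $r\in[0,2a]$. This is exactly the hypothesis $\mu<1/\max\{m(r):r\in[0,2a]\}$, and consequently $\lambda>0$ on $M$. The only mild subtlety is the behaviour at the poles $p,q$, where $m(r)\to 0$ and the polar chart $(r,\theta)$ degenerates; there $W$ itself extends as the zero vector and $\lambda\to 1$, so the Randers structure extends smoothly across $\{p,q\}$ and no separate chart argument is needed. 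There is no substantive obstacle in the proof --- it is essentially a one-line algebraic simplification of $a_{22}$ together with the invocation of the standard Zermelo criterion for strong convexity.
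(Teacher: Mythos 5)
Your proposal is correct and follows essentially the same route as the paper: substitute the navigation data $(h,W)$ with $W_2=\mu m^2$ into the Zermelo formulas from the Introduction to get \eqref{Randers_metrics}, and note that $\Vert W\Vert_h^2=(\mu m)^2<1$ (guaranteed by the hypothesis on $\mu$) yields strong convexity by the standard criterion of \cite{BCS}. Your explicit simplification of $a_{22}$ and the remark about the poles are slightly more detailed than the paper's argument, but not a different approach.
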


Indeed, observe that due to our condition $\mu <\frac{1}{m(r)}$ for all $r\in[0,a]$, $W$ in the canonical basis  $(\frac{\partial}{\partial r},\frac{\partial}{\partial \theta})$ of $T_xM$, reads
 $W=(W^1,W^2)=(0,{\mu})$, and hence 
$h(W,W)=b^2=\left({\mu}m\right)^2<1$, where $b^2:=a^{ij}b_ib_j$ is the Riemannian $a$-norm of the covariant vector $b=(b_1,b_2)$. This condition guarantees the strong convexity of the Randers metric $F=\alpha+\beta$ (see \cite{BCS}).


It is trivial to see that $W$ is a Killing vector field of $(M,h)$, and taking into account that the flow of $W$ is $\varphi(s; r(s),\theta(s))=(r(s),\theta(s)+\mu\cdot s)$, we obtain the global characterization of $F$-geodesics.

\begin{proposition}\label{F_Geodesic}

Let $(M,F=\alpha+\beta)$ be
the Randers rotational metric constructed from the navigation data $(h,W)$, where $(M,h)$ is a Riemannian 2-sphere  of revolution, and $W=\mu\cdot\frac{\partial}{\partial \theta}$, $\mu<\{\frac{1}{\max\{m(r)\}}:r\in[0,2a]\}$, is the breeze on $M$ blowing along parallels, then the $F$-unit speed geodesics $\mathcal{P}:(-\epsilon,\epsilon)\to M$ are given by
\begin{equation}\label{global Finsler geodesics}
\mathcal{P}(s)=(r(s),\theta(s)+{\mu}s),
\end{equation}
where $\gamma(s)=(r(s),\theta(s))$ is an $h$-unit speed geodesic.
\end{proposition}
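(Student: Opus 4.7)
The plan is to exploit the Zermelo navigation interpretation of $F=\alpha+\beta$ together with the fact that the wind $W=\mu\cdot\partial/\partial\theta$ is a Killing vector field of $h$. Since the coefficients $h_{11}=1$, $h_{22}=m^2(r)$ of $h=dr^2+m^2(r)d\theta^2$ do not depend on $\theta$, the coordinate field $\partial/\partial\theta$, and hence any constant multiple of it, generates a one-parameter group of $h$-isometries; the flow of $W$ is precisely $\varphi_s(r,\theta)=(r,\theta+\mu s)$, as already noted just before the statement.

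First, I would check that the candidate curve $\mathcal{P}(s)=(r(s),\theta(s)+\mu s)$ is $F$-unit speed. Differentiating, $\dot{\mathcal{P}}(s)=\dot{\gamma}(s)+W(\mathcal{P}(s))$, so $\dot{\mathcal{P}}(s)-W(\mathcal{P}(s))=\dot{\gamma}(s)$. The defining property of the Randers metric produced by Zermelo navigation from data $(h,W)$ is that $F(x,y)=1$ if and only if $h(y-W(x),y-W(x))=1$. Applying this criterion and using that $\gamma$ is $h$-unit speed yields $F(\mathcal{P}(s),\dot{\mathcal{P}}(s))=1$.

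Second, I would argue that $\mathcal{P}$ is actually an $F$-geodesic. Here I invoke the general fact (see \cite{S}, \cite{BR}, \cite{BCS}) that when $W$ is a Killing field for $h$ with $\Vert W\Vert_h<1$, the $F$-geodesics of the associated Randers metric are precisely the curves $s\mapsto\varphi_s(\gamma(s))$ with $\gamma$ an $h$-unit speed geodesic. The cleanest direct argument uses the navigation meaning of the $F$-length: for any piecewise smooth $\sigma:[0,T]\to M$ with $F(\sigma,\dot{\sigma})=1$, the pulled-back curve $\widetilde{\sigma}(s):=\varphi_{-s}(\sigma(s))$ has $\dot{\widetilde{\sigma}}=(\varphi_{-s})_{*}(\dot{\sigma}-W)$, and since $\varphi_{-s}$ is an $h$-isometry this gives $h(\dot{\widetilde{\sigma}},\dot{\widetilde{\sigma}})=h(\dot{\sigma}-W,\dot{\sigma}-W)=1$. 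Hence $T=\mathcal{L}_F(\sigma)=\mathcal{L}_h(\widetilde{\sigma})\geq d_h(\widetilde{\sigma}(0),\widetilde{\sigma}(T))$. Applied to $\sigma=\mathcal{P}$ the pullback is exactly $\gamma$, whose $h$-length equals $T$; comparing with any competitor $\sigma$ sharing the endpoints of $\mathcal{P}$ (whose pullback $\widetilde{\sigma}$ then shares the endpoints of $\gamma$) and using that $\gamma$ is an $h$-geodesic, we conclude $\mathcal{P}$ is $F$-minimizing wherever $\gamma$ is $h$-minimizing.

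The main obstacle is packaging this Killing-flow correspondence cleanly, i.e. checking rigorously that pullback by $\varphi_{-s}$ intertwines $F$-unit speed curves with $h$-unit speed curves and preserves length. Once this bijection is established, the identity $\mathcal{P}(s)=\varphi_s(\gamma(s))$ transports the $h$-geodesic property of $\gamma$ to the $F$-geodesic property of $\mathcal{P}$ and the proposition follows; as a bonus, the same correspondence will later be used to transfer conjugate and cut points between $(M,h)$ and $(M,F)$ (Propositions \ref{Prop_F_con} and \ref{prop_F_cut}).
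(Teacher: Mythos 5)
Your main line of argument is essentially the paper's: verify the unit-speed correspondence \eqref{h=1 then F=1} coming from the Zermelo criterion $F(x,y)=1\iff h(y-W(x),y-W(x))=1$, and then use the fact that $W$ is $h$-Killing to invoke the known navigation result (the paper cites \cite{R}, you cite \cite{S}, \cite{BR}, \cite{BCS}) that $F$-geodesics are exactly the flow-deformed $h$-geodesics $s\mapsto\varphi(s,\gamma(s))$. Up to that point the proposal matches the intended proof.

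The self-contained minimization argument you sketch, however, has a concrete gap at the endpoint-matching step. If $\sigma:[0,T']\to M$ is an $F$-unit speed competitor with $\sigma(0)=\mathcal{P}(0)=p$ and $\sigma(T')=\mathcal{P}(T)=q$, its pullback $\widetilde{\sigma}(s)=\varphi(-s,\sigma(s))$ ends at $\varphi(-T',q)$, whereas $\gamma$ ends at $\varphi(-T,q)$; these coincide only if $T'=T$ (or if $q$ is fixed by the flow), so it is not true that ``$\widetilde{\sigma}$ shares the endpoints of $\gamma$,'' and the inequality $\mathcal{L}_h(\widetilde{\sigma})\geq d_h(p,\gamma(T))$ does not follow directly. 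The standard repair is exactly the device the paper uses in the proof of Proposition \ref{prop_F_cut}: join $\varphi(-T',q)$ to $\varphi(-T,q)$ by the flow line $\zeta(s)=\varphi(s,q)$, whose $h$-speed is $\Vert W\Vert_h=\mu\, m(r(q))<1$, so that $d_h(\varphi(-T',q),\varphi(-T,q))<|T-T'|$; combining this with $d_h(p,\varphi(-T',q))\leq T'$ and the minimality $d_h(p,\gamma(T))=T$ via the triangle inequality gives $T'\geq T$. (Also note your formula $\dot{\widetilde{\sigma}}=(\varphi_{-s})_{*}(\dot{\sigma}-W)$ silently uses that $W$ is invariant under its own flow; that is true here and worth stating.) With that correction your direct argument goes through and reproduces, for geodesics, the same flow-comparison trick the paper reserves for cut points.
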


Indeed, taking into account that Zermelo's navigation gives 
\begin{equation}\label{h=1 then F=1}
h(\gamma(s),\dot{\gamma}(s))=1 \textrm{   if and only if    } 
F(\mathcal P(s),\dot{\mathcal P}(s))=1.
\end{equation}
It follows that we can use the same arclength parameter $s$ on both Riemannian and Randers geodesics, and since $W$ is $h$-Killing vector field, the conclusion follows from \cite{R}, or can be verified by straightforward computation.


\begin{corollary}
The pair $(M,F)$ is a forward complete Finsler surface of Randers type. 
\end{corollary}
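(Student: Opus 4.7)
The plan is to reduce the forward completeness of $(M,F)$ to the completeness of $(M,h)$ by exploiting the explicit description of $F$-geodesics obtained in Proposition \ref{F_Geodesic}. Recall that forward completeness of a Finsler manifold means that every $F$-geodesic, parametrized by arclength, extends to a geodesic defined on all of $[0,\infty)$.

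First I would observe that $(M,h)$ is a compact Riemannian manifold (being diffeomorphic to a 2-sphere), hence by the classical Hopf-Rinow theorem it is geodesically complete; every $h$-unit speed geodesic $\gamma(s)=(r(s),\theta(s))$ is therefore defined for all $s\in\R$. Next, since $M$ is compact and $W=\mu\cdot\frac{\partial}{\partial\theta}$ is a smooth (indeed Killing) vector field on $M$, its flow $\varphi_s$ is defined for all $s\in\R$. In coordinates it is simply $\varphi_s(r,\theta)=(r,\theta+\mu s)$, which is manifestly defined globally.

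I would then invoke Proposition \ref{F_Geodesic}, which states that any $F$-unit speed geodesic has the explicit form
\begin{equation*}
\mathcal{P}(s)=\varphi_s(\gamma(s))=(r(s),\theta(s)+\mu s),
\end{equation*}
where $\gamma$ is an $h$-unit speed geodesic. Since both $\gamma(s)$ and $\varphi_s$ are defined for all $s\in\R$, the composition $\mathcal{P}(s)$ is defined for all $s\in\R$, and in particular on $[0,\infty)$. Thus every $F$-unit speed geodesic extends forward indefinitely, which is precisely the definition of forward completeness.

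No serious obstacle is expected here: the argument is essentially a direct corollary of Proposition \ref{F_Geodesic} combined with the compactness of $M$. The only point one must be slightly careful about is the distinction between forward and backward completeness, which is relevant in the non-reversible Finsler setting; however, since the flow $\varphi_s$ of $W$ is defined for all real $s$ (not only positive ones), the same argument in fact yields backward completeness as well, so $(M,F)$ is in fact bi-complete, and forward completeness follows a fortiori.
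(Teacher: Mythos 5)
Your argument is correct and is essentially the justification the paper intends: the corollary is stated without proof as an immediate consequence of Proposition \ref{F_Geodesic}, namely that every $F$-unit speed geodesic is an $h$-geodesic (complete, since $M$ is compact) deviated by the everywhere-defined flow of $W$, with the same arclength parameter by \eqref{h=1 then F=1}. One could even shortcut the whole argument by the Finslerian Hopf--Rinow theorem on a compact manifold, but your route matches the paper's.
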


We recall from our previous work \cite{HCS} the {\it Finsler version of Clairaut relation}. For an $F$-unit geodesic $\mathcal P(s)=\varphi(s,\gamma(s))$ obtained by deviating an $h$-geodesic $\gamma(s)$ with Clairaut constant $\nu$ by means of the $W$-flow $\varphi$, the following relation holds
\begin{equation}\label{Finslerian Clairaut rel}
\cos \psi(s)=\frac{\nu+\mu m^2(r(s))}{m(r(s))\sqrt{1+2\mu\nu+\mu^2 m^2(r(s))}},
\end{equation}
where $\psi(s)$ is the angle between the vectors $\dot{\mathcal P}(s)$ and $\frac{\partial}{\partial\theta}|_{\mathcal P(s)}$. We have proved this formula for a surface of revolution homeomorphic to $\R^2$ in \cite{HCS}, but the proof carries out on any kind of surface of revolution. 

\begin{figure}[H]
\begin{center} \setlength{\unitlength}{1cm} 
\begin{picture}(10,4.5) 
\put(4,1){\vector(1,0){5}}
\put(5,0){\vector(0,1){5}} 
\put(4,1){\vector(1,0){3.1}} 
\put(5,1){\vector(2,1){2.8}} 
\put(5,1){\vector(1,2){0.7}}
\qbezier(7,1)(7.025,1.05)(7.05,1.1) 
\qbezier(7.1,1.2)(7.125,1.25)(7.15,1.3)
\qbezier(7.2,1.4)(7.225,1.45)(7.25,1.5)
\qbezier(7.3,1.6)(7.325,1.65)(7.35,1.7)
\qbezier(7.4,1.8)(7.425,1.85)(7.45,1.9)

\qbezier(7.5,2)(7.525,2.05)(7.55,2.1)
\qbezier(7.6,2.2)(7.625,2.25)(7.65,2.3)
\multiput(5,2.4)(0.2,0){13} {\line(1,0){0.1}} 
\qbezier(5.2,1.5)(5.7,1.6)(5.7,1) 
\qbezier(6,1.5)(6.5,1.6)(6.5,1) 
\put(5.5,1.7){$\phi$} 
\put(6.5,1.5){$\psi$} 
\put(4.5,4.5){$\frac{\partial}{\partial r}$} 
\put(6.5,0.5){${W}={\mu \cdot\frac{\partial}{\partial \theta}}$}
\put(9,0.4){$\frac{\partial}{\partial \theta}$} 
\put(5.7,2.7){$\dot{\gamma}$} 
\put(7.8,2.6){$\dot{\mathcal{P}}$} 
\end{picture} 
\end{center}
\caption{The angle $\psi$  between $\dot{\mathcal P}$ and a parallel.}
\end{figure}
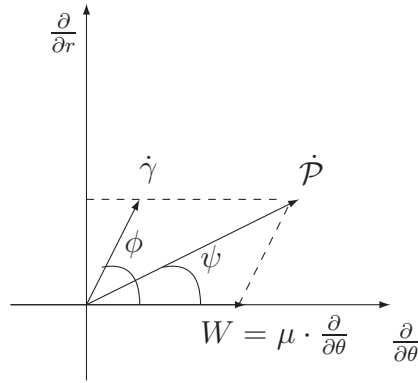

\begin{remark}
\begin{enumerate}
\item We have seen in Remark \ref{h-Clairaut particular cases} that an $h$-geodesic is characterised by its initial point and Clairaut constant $(p_0,\nu)$, that is equivalent to the usual initial conditions $(p_0,v)\in TM$. Since the corresponding Finsler geodesic is also determined by its starting point ${p}_0$ and initial velocity  $y:=v+\mu\cdot\frac{\partial}{\partial \theta}\in T_{p_0}M$, where $\mu$ is constant, we can see that this $F$-geodesic is uniquely determined by its initial point and Clairaut constant $\nu$. We have to pay attention though that $\nu$ is the Clairaut constant of the original $h$-Riemannian geodesic. 
\item Let $p_0\in\{r=a\}$ be a point on the equator, let $\gamma^{p_0}_{\nu}(s)=( r(s),\theta(s))$ be the $h$-geodesic from $p_0$ with Clairaut constant $\nu$, and let $\mathcal {P}(s)=( r(s), \theta(s)+\mu s)$
be the corresponding $F$-geodesic. Observe that 
\begin{enumerate}
\item $\mathcal P$ is a meridian, that is $\psi=\pm\frac{\pi}{2}$, if and only if $\nu=-\mu m^2(a)$. 

Indeed, $\psi=\pm\frac{\pi}{2}$ means $\cos \psi =0$, and from Finslerian Clairaut relation \eqref{Finslerian Clairaut rel} we obtain the desired value.

\item  $\mathcal P$ is a parallel,  namely the equator in this case, that is $\psi=0$, if and only if $\nu=m(a)$.

\item if $\nu\in (-\mu m^2(a),0)\cup(0,m(a))$, then $\psi\in (-\frac{\pi}{2},\frac{\pi}{2})\setminus\{0\}$, and the geodesic  $\mathcal{P}^{p_0}_{\nu}(s)=(r(s), \theta(s)+\mu s)$ is neither a meridian nor a parallel with $\frac{d\theta(s)}{ds}>0$.
\end{enumerate}
\end{enumerate}

\end{remark}

We have the following important result. 

\begin{lemma}\label{F_curvature}
The flag curvature $\mathcal{K}$ of the Randers rotational metric $(M,F=\alpha+\beta)$ given by \eqref{Randers_metrics} lives on the base manifold $M$. Moreover, we have $\mathcal{K}(x,y)=\mathcal{K}(x)=G(x)$, for any $(x,y)\in TM$, where $G$ is the Gaussian curvature of $(M,h)$.
\end{lemma}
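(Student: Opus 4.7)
The plan is to apply the standard curvature formula for Randers metrics produced by Zermelo navigation with a Killing wind, and then specialize it to the case of a surface. The first ingredient is to verify that $W=\mu\cdot\frac{\partial}{\partial\theta}$ is a Killing vector field of $(M,h)$. Since $h=dr^2+m(r)^2d\theta^2$ has coefficients independent of $\theta$, and $W$ has constant components $(0,\mu)$ in the coordinate frame $(\partial_r,\partial_\theta)$, one computes
\begin{equation*}
(\mathcal L_W h)_{ij}=W^k\partial_k h_{ij}+h_{kj}\partial_i W^k+h_{ik}\partial_j W^k=\mu\,\partial_\theta h_{ij}=0,
\end{equation*}
so $W$ is $h$-Killing. (Geometrically this is just the statement that rotations along parallels are $h$-isometries.)

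The second ingredient is the Bao--Robles--Shen theorem for Zermelo navigation with Killing wind (see \cite{BR}, \cite{R}): if $F$ is the Randers metric with navigation data $(h,W)$ and $W$ is $h$-Killing with $\Vert W\Vert_h<1$, then the flag curvature of $F$ at a flag $(x,y)$ coincides with the $h$-sectional curvature at $x$ of the 2-plane corresponding to $y$ under the Legendre--type correspondence induced by the navigation. On a \emph{surface} this 2-plane is forced to be the whole tangent space $T_x M$, and the $h$-sectional curvature reduces to the Gaussian curvature $G(x)=-m''(r(x))/m(r(x))$. Consequently $\mathcal{K}(x,y)=G(x)$ for every $(x,y)\in\widetilde{TM}$, which in particular shows that $\mathcal{K}$ descends to a function on $M$.

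If instead one prefers a self-contained argument using the machinery already developed in the paper, one can proceed as follows. By Proposition \ref{F_Geodesic}, every $F$-unit geodesic $\mathcal{P}(s)$ is obtained from an $h$-unit geodesic $\gamma(s)$ by the flow $\varphi$ of $W$, which is an $h$-isometry. A perpendicular Jacobi field $J$ along $\mathcal{P}$ with respect to the $g$-metric of $F$ corresponds via $\varphi_*$ to a perpendicular $h$-Jacobi field $\widetilde J$ along $\gamma$, and the arclength parameter is preserved by \eqref{h=1 then F=1}. Writing the Finsler Jacobi equation $J''+\mathcal{K}(\mathcal{P},\dot{\mathcal{P}})J=0$ and the Riemannian Jacobi equation $\widetilde J''+G(\gamma)\widetilde J=0$ and comparing, one concludes $\mathcal{K}(x,y)=G(x)$.

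The main obstacle is the correct identification of flags in the Bao--Robles--Shen formula for a general Finsler flag curvature; this obstacle evaporates in dimension two since there is only one tangent plane, so the flag curvature automatically descends to $M$ and coincides with $G$. Computing the flag curvature directly from the Randers data \eqref{Randers_metrics} via the defining tensorial formula would be significantly more laborious and is to be avoided.
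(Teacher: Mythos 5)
Your main argument is correct and runs along essentially the same lines as the paper's: both proofs note that $W=\mu\cdot\frac{\partial}{\partial\theta}$ is $h$-Killing and then outsource the curvature computation to the Bao--Robles navigation correspondence, specializing to dimension two. The difference is in which form of that correspondence is invoked. The paper uses the Finsler--Einstein statement of \cite{BR}: $(M,F)$ is Einstein with Ricci scalar $\mathcal{K}(x)$ if and only if $(M,h)$ is Einstein with the same Ricci scalar and $W$ is Killing, combined with the fact that every Riemannian surface is Einstein with $\mathrm{Ric}^{(h)}=G(x)$; since on a surface the Ricci scalar is the flag curvature, this gives $\mathcal{K}(x,y)=G(x)$. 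You instead quote a pointwise flag-curvature--equals--sectional-curvature statement for Killing-wind navigation in arbitrary dimension; be aware that in that generality this is not literally what \cite{BR} or \cite{R} prove (those references give the Einstein correspondence and the constant flag curvature classification; the general pointwise Killing-deformation statement is a later result, e.g.\ of Foulon--Matveev). In dimension two, however, your statement is equivalent to the Einstein version the paper uses, so the argument goes through; it would be cleaner to cite the Einstein form, as the paper does. Your alternative ``self-contained'' Jacobi-field sketch is the weakest part: the correspondence $J\mapsto d\varphi\cdot J$ of Proposition \ref{Prop_F_con} preserves $h$-norms, but the scalar Finsler Jacobi equation involves the norm with respect to $g_{\dot{\mathcal{P}}}$, which does not coincide with the $h$-norm, so the comparison of the two scalar equations needs additional justification before it yields $\mathcal{K}=G$; as written it should be regarded as a heuristic, not a proof.
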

\begin{proof}
Even though similar with the proof of Lemma 4.3 in \cite{HCS} we sketch it here for the sake of completeness. We can see that our Randers rotational surface of revolution is Finsler-Einstein with Ricci scalar $Ric^{(F)}=\mathcal{K}(x)$, where $\mathcal{K}(x)$ is the sectional curvature of $(M,F)$.

From \cite{BR}, we know that $(M,F)$ is Finsler-Einstein with Ricci scalar $Ric^{(F)}=\mathcal{K}(x)$ if and only if $(M,h)$ is Einstein with Ricci scalar $Ric^{(h)}=\mathcal{K}(x)$ and $W$ is Killing vector field for $(M,h)$.

Next, we recall that any Riemannian surface $(M,h)$ is an Einstein manifold with Ricci scalar $Ric^{(h)}=G(x)$ that completes the proof.

$\qedd$
\end{proof}
We turn now to the study of the conjugate points of $F$-geodesics. 
\begin{proposition}\label{Prop_F_con}
Let $(M,F=\alpha+\beta)$ be a Randers rotational surface of revolution with navigation data $(h,W)$, where $W=\mu\cdot\frac{\partial}{\partial\theta}$ is the breeze on $M$ blowing along parallels $\mu<\frac{1}{m(r)}$ for any $r$. Suppose that $\gamma:[0,l]\to M$ is an $h$-geodesic and $\mathcal{P}(s)=\varphi(s,\gamma(s))$ is the corresponding $F$-geodesic, $t\in[0,l]$. Then $\mathcal{P}(l)$ is conjugate to $p=\mathcal{P}(0)$ along $\mathcal{P}$ (with respect to metric $F$) if and only if $\gamma(l)$ is conjugate to $p=\gamma(0)$ along $\gamma$ (with respect to metric $h$).
\end{proposition}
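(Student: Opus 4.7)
The plan is to characterize conjugate points via vanishing Jacobi fields and to exploit the fact that, since $W$ is an $h$-Killing vector field, the wind flow $\varphi$ intertwines $h$-geodesics and $F$-geodesics in the sense of Proposition \ref{F_Geodesic}. The essential structural observation is that the time-$s$ map $\varphi(s,\cdot):M\to M$ is a diffeomorphism for each $s\in\R$, so the correspondence $\gamma\longleftrightarrow\mathcal{P}=\varphi(s,\gamma(s))$ is a bijection between $h$-geodesics and $F$-geodesics sharing the initial point $p=\gamma(0)=\mathcal{P}(0)$.

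First I would set up a bijection between one-parameter variations through $p$. Given a smooth family of $F$-geodesics $\{\mathcal{P}_u\}$ with $\mathcal{P}_u(0)=p$ for every $u$, define $\gamma_u(s):=\varphi(-s,\mathcal{P}_u(s))$. Using the group property of the flow together with Proposition \ref{F_Geodesic}, each $\gamma_u$ is seen to be an $h$-geodesic with $\gamma_u(0)=p$, and the family depends smoothly on $u$. Conversely, any smooth $h$-geodesic variation $\{\gamma_u\}$ through $p$ yields an $F$-geodesic variation via $\mathcal{P}_u(s):=\varphi(s,\gamma_u(s))$. Differentiating the identity $\mathcal{P}_u(s)=\varphi(s,\gamma_u(s))$ with respect to $u$ at $u=0$ gives
\begin{equation*}
J_F(s)\;=\;(d\varphi_s)_{\gamma(s)}\bigl(J_h(s)\bigr),
\end{equation*}
where $J_F$ and $J_h$ are the variation vector fields along $\mathcal{P}$ and $\gamma$, respectively. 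Because $(d\varphi_s)_{\gamma(s)}$ is a linear isomorphism of tangent spaces, $J_F(l)=0$ if and only if $J_h(l)=0$, and the analogous equivalence holds at $s=0$ since $d\varphi_0=\id$.

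To conclude, I would invoke the standard characterization of conjugate points in both settings: $\mathcal{P}(l)$ is $F$-conjugate to $p$ along $\mathcal{P}$ precisely when there exists a non-trivial $F$-Jacobi field along $\mathcal{P}$ vanishing at $p$ and at $\mathcal{P}(l)$, and similarly for $h$. Since every such Jacobi field, in either setting, is realized as the variation field of a variation by geodesics through the starting point, the bijection of variations above transports these vanishing Jacobi fields back and forth, yielding the desired equivalence.

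The main point to handle carefully is the intrinsic meaning of a \emph{Finsler Jacobi field} along $\mathcal{P}$ and its identification with variation vector fields of $F$-geodesic variations through $p$. This identification is standard but relies on the nonlinear Chern connection along $\mathcal{P}$; the explicit formula $J_F=(d\varphi_s)_{\gamma(s)}(J_h)$ bypasses any direct manipulation of the Chern connection, since the argument is carried out entirely at the level of variations, with the Killing property of $W$ ensuring that the geodesic structure is preserved.
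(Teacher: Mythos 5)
Your proposal is correct and follows essentially the same route as the paper: transport geodesic variations through $p$ by the wind flow, observe that the variation field satisfies $\mathcal{J}(s)=d\varphi_s\cdot J(s)$ with $d\varphi_s$ a linear isomorphism (the paper computes it to be the identity in the $(r,\theta)$ coordinates), and read off the equivalence of vanishing at the endpoints. The only point to state explicitly is that the variation geodesics $\gamma_u$ must be taken $h$-unit speed (or one rescales the flow time by the speed, as the paper does with its factor $v(t)$) so that $\varphi(s,\gamma_u(s))$ is indeed an $F$-geodesic; this is harmless since Jacobi fields vanishing at both endpoints are orthogonal and are realized by unit-speed variations through $p$.
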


\begin{proof}
Let $\gamma:[0,l]\to M$ be an $h$-unit speed geodesic. Suppose $\Gamma(t,s):(-\varepsilon,\varepsilon)\times[0,l]\to M$ be an $h$-geodesic variation of $\gamma(s):=\Gamma(0,s)$ with variation vector field
\begin{equation*}
J(s):=\frac{\partial\Gamma(t,s)}{\partial t}\Bigg\vert_{t=0}.
\end{equation*}

Observe that this $J$ is actually given by \eqref{h-Jacobi} for any $\nu\in(0,m(a))$. If we assume that $\gamma(l)$ is $h$-conjugate to $\gamma(0)$ it follows that
\begin{equation*}
J(0)=J(l)=0 \quad \text{and} \quad J(s)\neq 0, \quad s\in(0,l).
\end{equation*}

By using the wind $W$, blowing up on $M$, with the flow $\varphi$, by deviating $\gamma$ we obtain the corresponding $F$-geodesic $\mathcal{P}(s)=\varphi(s,\gamma(s))$.\\
Let us consider the $F$-geodesic variation
\begin{equation*}
\overline{\mathcal{P}}(t,s)=\varphi(v(t)s,\Gamma(t,s)),
\end{equation*}
where $v(t)$ is the constant $h$-speed of the geodesic variation $\Gamma(t,s)$.

We obtained the $F$-Jacobi field
\begin{equation*}
\begin{split}
\mathcal{J}(s)=d\varphi\cdot J(s).
\end{split}
\end{equation*}
If we consider the flow $\varphi=(\varphi_1,\varphi_2)=(r,\theta+\mu s)$, it follows
\begin{equation}\label{iden_matrix}
d\varphi=\left(\begin{matrix}
\frac{\partial\varphi_1}{\partial r} & \frac{\partial\varphi_1}{\partial \theta} \\
\frac{\partial\varphi_2}{\partial r} & \frac{\partial\varphi_2}{\partial \theta} \\
\end{matrix}\right)=
\left(\begin{matrix}
1 & 0 \\ 0 & 1 
\end{matrix}\right),
\end{equation}
that is the identity matrix.

We obtain that $\mathcal{J}$ vanishes if and only if $J$ does, hence
\begin{equation*}
\mathcal{J}(0)=\mathcal{J}(l)=0 \quad \text{and} \quad \mathcal{J}(s)\neq 0, \quad s\in(0,l),
\end{equation*}
that is $\mathcal{P}(l)$ is conjugate to $p=\mathcal{P}(0)$ along $\mathcal{P}$, whenever $\gamma(l)$ is conjugate to $p=\gamma(0)$ along $\gamma$.

$\qedd$
\end{proof}

\section{The proof of Theorem \ref{thm_F_cut_locus}}\label{Proof of main theorem}

\quad Let $(M,F=\alpha+\beta)$ be a Randers rotational $2$-sphere of revolution obtained from the navigation data $(h,W)$, where $h$ is the Riemannian metric of the $2$-sphere of revolution $M$, and $W:=\mu\cdot\frac{\partial}{\partial \theta}$ is the wind blowing along the parallels, where $\mu<\{ \frac{1}{\max\{m(r)}:r\in[0,2a]\}$.

Extending by analogy the definitions of poles from Riemannian setting (see Section \ref{Riem_2_sph}), we obtain

\begin{lemma}
The $F$-cut point of the pole $p$ on $(M,F)$ is the other pole $q$.
\end{lemma}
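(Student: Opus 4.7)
The plan is to use Proposition \ref{F_Geodesic} together with the fact that the wind $W=\mu\cdot\partial/\partial\theta$ vanishes at the poles to describe every $F$-geodesic emanating from $p$ explicitly, and then to verify directly that $q$ is the first $F$-cut point along each such geodesic.

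First, I would observe that because $p$ is an $h$-pole, every $h$-unit speed geodesic issued from $p$ is a meridian, so it may be written as $\gamma(s)=(s,\theta_{0})$ for $s\in[0,2a]$ with $\theta_{0}$ constant. By Proposition \ref{F_Geodesic}, every $F$-unit speed geodesic $\mathcal{P}$ from $p$ has the form $\mathcal{P}(s)=\varphi(s,\gamma(s))=(s,\theta_{0}+\mu s)$. Since $q$ is the unique point with $r=2a$ and $W$ vanishes there (so $q$ is a fixed point of $\varphi$), every such $\mathcal{P}$ satisfies $\mathcal{P}(2a)=q$. Moreover, because $F(\mathcal{P},\dot{\mathcal{P}})=1$ corresponds to $h(\gamma,\dot\gamma)=1$ and the $r$-coordinate of $\mathcal{P}$ increases at unit speed from $0$ to $2a$, the $F$-length of $\mathcal{P}|_{[0,2a]}$ equals $2a$ and any $F$-curve from $p$ to $q$ must have length at least $2a$; hence $d_{F}(p,q)=2a$ and each $\mathcal{P}|_{[0,2a]}$ is $F$-minimizing.

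Next, I would confirm that $q$ is actually a cut point of $p$, and that no earlier point on $\mathcal{P}$ is. For the first part, two distinct meridians ($\theta_{0}\not\equiv\theta_{1}\pmod{2\pi}$) produce two distinct $F$-geodesics of equal minimal length $2a$ from $p$ to $q$, which forces $q$ to be a cut point. For the second part, suppose $\mathcal{P}(s_{0})$ with $s_{0}\in(0,2a)$ were a cut point. Then either (a) another $F$-minimizing geodesic $\mathcal{Q}(s)=(s,\theta_{1}+\mu s)$ from $p$ would reach $\mathcal{P}(s_{0})$, or (b) $\mathcal{P}(s_{0})$ would be $F$-conjugate to $p$ along $\mathcal{P}$. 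Case (a) is ruled out directly because $\theta_{1}+\mu s_{0}\equiv\theta_{0}+\mu s_{0}\pmod{2\pi}$ forces $\mathcal{Q}=\mathcal{P}$. Case (b) is ruled out by Proposition \ref{Prop_F_con}: it would require $\gamma(s_{0})$ to be $h$-conjugate to $p$ along the meridian $\gamma$, but since the $h$-cut point of $p$ along any meridian is $q=\gamma(2a)$, no $h$-conjugate point along $\gamma$ occurs before $s=2a$.

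The main obstacle, though conceptually simple, is justifying the explicit form of all $F$-geodesics from $p$: one must recognize that the correspondence $\gamma\leftrightarrow\mathcal{P}$ of Proposition \ref{F_Geodesic} matches initial velocities at $p$ precisely because $W$ vanishes there, so that there is no stray $F$-geodesic from $p$ that fails to come from a meridian. Once this is in hand, combining the explicit description with Proposition \ref{Prop_F_con} and the classical bound (cut point $\leq$ first conjugate point) applied to the Riemannian meridian delivers the conclusion $\mathcal{C}^{F}_{p}=\{q\}$.
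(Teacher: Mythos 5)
Your argument is correct and follows essentially the same route as the paper: both rest on the poles being fixed points of the flow $\varphi$, so that by Proposition \ref{F_Geodesic} the $F$-geodesics emanating from $p$ are exactly the bending meridians, all of the same $F$-length $2a$ and all ending at $q$, whence $\mathcal{C}^F_p=\{q\}$. The paper's proof is terser; your additional verifications (that each bending meridian is minimizing up to $q$, and that no earlier cut point can occur, via uniqueness of geodesics and Proposition \ref{Prop_F_con}) only make explicit steps the paper leaves implicit.
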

\begin{proof}
Recall that a pair of poles $p,q$ on $(M,h)$ are invariant under the flow acting along parallels, i.e. $\varphi(s,p)=p$ and $\varphi(s,q)=q$, for any $s\in\R$. \\
Since the $h$-geodesics joining $p$ and $q$ are meridians and all of them have same $h$-length, it follows that the $F$-geodesics joining $p$ and $q$ are bending meridians with same $F$-length by \eqref{h=1 then F=1}. Hence we get that $q$ is the cut point of $p$ on $(M,F)$.
$\qedd$
\end{proof}

\begin{remark}
In \cite{HCS} the curve $\mathcal{P}(s)=\varphi(s,\gamma(s))$ is called a {\it twisted meridian}, where $\gamma(s)$ is a meridian. However since a pair of poles on $2$-sphere of revolution $M$ are invariant under the wind, we prefer to use the words {\it bending meridian} for $\mathcal{P}(s)=\varphi(s,\gamma(s))$, where $\gamma(s)$ is meridian on $(M,h)$.
\end{remark}

\begin{corollary}
The points $p,q$ are a pair of poles on $(M,F)$.
\end{corollary}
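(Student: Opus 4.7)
The plan is to reduce the corollary to two observations. The preceding lemma already provides that $q$ is the unique $F$-cut point of $p$, so what remains is to verify the pole property itself for $F$, i.e.\ that any two points $x_1, x_2 \in M$ with $d_F(p,x_1)=d_F(p,x_2)$ can be interchanged by an $F$-isometry fixing $p$ (and symmetrically for $q$).

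First, I would note that the rotations $R_c : (r,\theta) \mapsto (r, \theta+c)$ are $F$-isometries. Indeed, from Proposition \ref{Prop_Randers_metric} the coefficients $a_{ij}$ and $b_i$ depend only on $r$, so $R_c$ preserves both $\alpha$ and $\beta$; equivalently, $R_c$ preserves $h$ (since $m(r)$ depends only on $r$) and preserves the wind $W=\mu\,\partial_\theta$, and Zermelo's navigation recovers $F$ uniquely from $(h,W)$. Moreover each $R_c$ fixes both poles.

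Next, I would compute the $F$-distance from $p$ explicitly. By Proposition \ref{F_Geodesic}, every $F$-unit speed geodesic from $p$ has the form $\mathcal{P}(s)=\varphi(s,\gamma(s))$ where $\gamma$ is an $h$-unit speed geodesic from $p$; since $p$ is an $h$-pole, $\gamma$ is a meridian, so
\begin{equation*}
\mathcal{P}(s) = (s,\theta_0 + \mu s),\qquad s\in[0,2a].
\end{equation*}
The preceding lemma ensures that $\mathcal{P}$ is minimizing precisely on $[0,2a)$, hence $d_F(p,\mathcal{P}(s)) = s$ for $s\in(0,2a)$. Since every $x\in M\setminus\{p,q\}$ lies on exactly one such bending meridian (pick $\theta_0 = \theta(x)-\mu\, r(x)$ and evaluate at $s=r(x)$), this yields the clean identity $d_F(p,x)=r(x)$.

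Consequently, $d_F(p,x_1)=d_F(p,x_2)$ forces $r(x_1)=r(x_2)$, so $x_2 = R_c(x_1)$ for a unique $c \in [0,2\pi)$, and this $R_c$ is an $F$-isometry fixing $p$: the pole property holds for $p$. For $q$, I would appeal to the equatorial reflection $\sigma : (r,\theta)\mapsto(2a-r,\theta)$, which is an $h$-isometry by symmetry of $m$ and preserves $W=\mu\,\partial_\theta$, hence is an $F$-isometry interchanging $p$ and $q$; the pole property for $q$ then follows by transport along $\sigma$. I do not anticipate a real obstacle here — the only point requiring mild care is recording that the $F$-isometry group contains the $\theta$-rotations (which is immediate from the Zermelo-data description), since the non-reversibility of $F$ rules out the reflection $\theta\mapsto -\theta$ that one might naively try to use.
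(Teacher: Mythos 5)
Your argument is correct, but it takes a noticeably longer route than the paper, which in fact offers no proof at all for this corollary: having ``extended by analogy the definitions of poles from the Riemannian setting,'' the paper takes \emph{a pair of poles} to mean a pole together with its unique cut point, so the corollary is read off immediately from the preceding lemma ($\mathcal{C}^F_p=\{q\}$, and by the equatorial/rotational symmetry likewise for $q$). You instead verify the pole property in the isometry sense: the rotations $R_c$ preserve $(h,W)$, hence $F$ (correct, since $a_{ij}$, $b_i$ in Proposition \ref{Prop_Randers_metric} depend only on $r$), the identity $d_F(p,x)=r(x)$ via the bending meridians of Proposition \ref{F_Geodesic} together with minimality up to the cut point $q$ (one small slip: the bending meridian is minimizing on the closed interval $[0,2a]$, not ``precisely on $[0,2a)$''), and the equatorial reflection $\sigma$, which uses the standing symmetry $m(r)=m(2a-r)$ and fixes $W$, to transport everything to $q$. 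What your version buys is an explicit confirmation that $p$ and $q$ really are poles of $(M,F)$ in the strong, isometry-based sense (not merely ``pole and its cut point''), plus the useful byproduct $d_F(p,x)=r(x)$ and the correct observation that non-reversibility of $F$ forbids the reflection $\theta\mapsto-\theta$; what the paper's reading buys is brevity, since under its definition the lemma already is the proof.
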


\begin{remark}
In the Finslerian universal covering manifold $(\widetilde{M},\widetilde{F}=\widetilde{\alpha}+\widetilde{\beta})$, with the covering projection $\Pi:\widetilde{M}\to M \setminus\{p,q\}$ we use the notation $\widetilde{\mathcal{P}}^+(s)=(s,\widetilde{\gamma}(s))$ for an $\widetilde{F}$-geodesic obtained from $\widetilde{\gamma}(s)$ in the wind blowing direction and $\widetilde{\mathcal{P}}^-(s)=(-s,\widetilde{\gamma}(s))$ for an $\widetilde{F}$-geodesic advancing against the wind.
\end{remark}

\begin{lemma}\label{F-equator}
Let $\widetilde{\gamma}(s)=(\widetilde{r}(s),\widetilde{\theta}(s))$ be an $\widetilde{h}$-unit speed geodesic on $\widetilde{M}$ with Clairaut constant $\nu=m(a)$ joining the points $\widetilde{p}_0:=(a,0)$ and $\widetilde{q}_0:=(a,\pi)$ , i.e. $\widetilde{\gamma}(s)$ is an equator and $\widetilde{\theta}(\widetilde{p}_0)=0$, $\widetilde{\theta}(\widetilde{q}_0)=\pi$ or $\widetilde{q}_0$ is antipodal point of $\widetilde{p}_0$ along $\widetilde{\gamma}$. \\
 Then the $\widetilde{F}$-unit speed geodesic $\widetilde{\mathcal{P}}^+(s)=\varphi(s,\widetilde{\gamma}(s))$ will join the point $\widetilde{p}_0=\widetilde{\mathcal{P}}^+(0)$ with $\widetilde{q}_1=\widetilde{\mathcal{P}}^+(\pi)=(a,\pi(1+\mu))$. On the other hand, $\widetilde{\mathcal{P}}^-(s)=\varphi(-s,\widetilde{\gamma}(s))$ will join $\widetilde{p}_0=\widetilde{\mathcal{P}}^-(0)$ to the point $\widetilde{q}_2=\widetilde{\mathcal{P}}^-(\pi)=(a,\pi(1-\mu))$.
\end{lemma}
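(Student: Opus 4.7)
The plan is a direct two-step computation: first identify $\widetilde{\gamma}$ explicitly using the Clairaut relation, then apply the Zermelo-flow formula of Proposition \ref{F_Geodesic}.

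First I would pin down $\widetilde{\gamma}$. The hypothesis $\nu=m(a)$ coincides with the maximum value of $m$ on $[0,2a]$ (by the equatorial symmetry $m(r)=m(2a-r)$ from Remark \ref{rem_2_sphere_properties}), so the Clairaut relation $m(\widetilde{r}(s))\cos\phi(s)=m(a)$ forces $\widetilde{r}(s)\equiv a$ and $\cos\phi(s)\equiv 1$. Integrating the remaining equation $\widetilde{\theta}'(s)=\nu/m^{2}(\widetilde{r}(s))=1/m(a)$ from the initial condition $\widetilde{\gamma}(0)=\widetilde{p}_0=(a,0)$ then yields $\widetilde{\gamma}(s)=(a,\,s/m(a))$, and $\widetilde{q}_0$ is reached at $\widetilde{h}$-arclength $s=\pi\,m(a)$.

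Next I would apply Proposition \ref{F_Geodesic}, using the explicit form $\varphi(s,(r,\theta))=(r,\theta+\mu s)$ of the wind flow, to obtain $\widetilde{\mathcal{P}}^{+}(s)=(a,\,s/m(a)+\mu s)$. The identity \eqref{h=1 then F=1} guarantees that $s$ is simultaneously the $\widetilde{h}$- and $\widetilde{F}$-arclength, so evaluating at the parameter at which $\widetilde{\gamma}$ reaches $\widetilde{q}_0$ produces the stated endpoint $\widetilde{q}_1=(a,\,\pi(1+\mu m(a)))$, which matches the formula in the lemma under the implicit normalization $m(a)=1$. The endpoint $\widetilde{q}_2$ for $\widetilde{\mathcal{P}}^{-}$ follows at once from the same computation with the sign of the wind contribution flipped.

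The step that I expect to require the most care is justifying that $\widetilde{\mathcal{P}}^{-}(s)=\varphi(-s,\widetilde{\gamma}(s))$, defined using the \emph{reverse} direction of the wind flow, is genuinely an $\widetilde{F}$-unit speed geodesic of the original Randers structure, since Proposition \ref{F_Geodesic} covers only the $\varphi(+s,\cdot)$ case. My approach is to observe that the equator is already an $\widetilde{h}$-geodesic (because $m'(a)=0$ by the equatorial reflection), and hence an $\widetilde{F}$-geodesic as an unparametrized curve; both $\widetilde{\mathcal{P}}^{+}$ and $\widetilde{\mathcal{P}}^{-}$ then describe the two orientations of this common geodesic, and a short direct calculation from the Randers coefficients \eqref{Randers_metrics} verifies their $F$-speeds and thus confirms the endpoints.
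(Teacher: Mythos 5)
Your endpoint computation coincides with the paper's proof: the paper simply evaluates the flow at the antipodal point, $\widetilde{\mathcal{P}}^+(\pi)=\varphi(\pi,\widetilde{\gamma}(\pi))=(a,\pi(1+\mu))$ and $\varphi(-\pi,(a,\pi))=(a,\pi(1-\mu))$, implicitly using $d_{\widetilde{h}}(\widetilde{p}_0,\widetilde{q}_0)=\pi$, i.e.\ the normalization $m(a)=1$ (consistent with the factor $\mu R$ in part 3 of Theorem \ref{thm_F_cut_locus}); your explicit Clairaut identification of the equator and your remark about the $m(a)$ normalization are harmless refinements. One small quibble: the symmetry $m(r)=m(2a-r)$ alone only gives $m'(a)=0$, not that $m(a)$ is the global maximum of $m$; but this does not matter, since the hypothesis already declares $\widetilde{\gamma}$ to be the equator.

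The genuine problem is your final paragraph. Since $\mu<1/m(a)$, the curve $\widetilde{\mathcal{P}}^-(s)=\varphi(-s,\widetilde{\gamma}(s))=\bigl(a,\,s(\tfrac{1}{m(a)}-\mu)\bigr)$ has increasing $\widetilde{\theta}$, so it traverses the equator in the \emph{same} direction as $\widetilde{\mathcal{P}}^+$ (only more slowly), not in the opposite orientation; and it is \emph{not} $F$-unit speed: its velocity $(0,\tfrac{1}{m(a)}-\mu)$ has constant $F$-norm $\frac{1-\mu m(a)}{1+\mu m(a)}<1$ by the Zermelo condition $\Vert y/F(x,y)-W\Vert_h=1$, so the ``short direct calculation from \eqref{Randers_metrics}'' you propose would refute, rather than verify, the unit-speed claim. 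The correct interpretation is that $\varphi(-s,\widetilde{\gamma}(s))$ is a unit-speed geodesic of the \emph{reverse} Randers metric $\bar{F}(x,y)=F(x,-y)$, i.e.\ the one with navigation data $(h,-W)$; equivalently, its reversed parametrization is the $F$-unit-speed geodesic travelling from $\widetilde{q}_2$ back to $\widetilde{p}_0$ against the wind. The paper sidesteps this issue entirely: its proof of the $\widetilde{\mathcal{P}}^-$ half is nothing more than the evaluation $\varphi(-\pi,(a,\pi))=(a,\pi(1-\mu))$, and unit speed is asserted only for $\widetilde{\mathcal{P}}^+$. So the endpoint statements you establish are fine, but the step you yourself single out as the delicate one rests on two false assertions and should either be replaced by the reverse-metric argument above or simply omitted, as in the paper.
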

\begin{remark}
Observe that $\Pi(\widetilde{q}_1)=\Pi(\widetilde{q}_2)\in M$.
\end{remark}
\begin{proof}
Let $\widetilde{p}_0$ be an arbitrary point on equator and $\widetilde{q}_0$ be an antipodal point to $\widetilde{p}_0$. Let $\widetilde{\gamma}(s)=(\widetilde{r}(s),\widetilde{\theta}(s))$, be an $\widetilde{h}$-unit speed geodesic joining $\widetilde{p}_0$ to $\widetilde{q}_0$, that is
\begin{equation*}
\widetilde{p}_0=\widetilde{\gamma}(0)=(a,0),\quad \widetilde{q}_0=\widetilde{\gamma}(\pi)=(a,\pi). 
\end{equation*}

We recall that the wind is blowing along the parallels (see \cite{HCS}). Since $\widetilde{d}_h(\widetilde{p}_0,\widetilde{q}_0)=\pi$, we know that the $\widetilde{F}$-unit speed geodesic $\widetilde{\mathcal{P}}^+(s)=\varphi(s,\widetilde{\gamma}(s))$ obtained by $\widetilde{\gamma}(s)$ is joining $\widetilde{p}_0$ to the point
\begin{equation*}
\begin{split}
\widetilde{\mathcal{P}}^+(\pi)&=\varphi(\pi,\widetilde{\gamma}(\pi))=(a,\pi(1+\mu)),\\
\end{split}
\end{equation*}
therefore the point $\widetilde{q}_0$ will change the position to $\widetilde{q}_1=(a,\pi(1+\mu))$, hence $\widetilde{d}_F(\widetilde{p}_0,\widetilde{q}_1)=\pi$. 

On the other hand $\widetilde{\mathcal{P}}^-(s)$ will join $\widetilde{p}_0$ to $\widetilde{q}_2=(a,\pi(1-\mu))$.

$\qedd$

\end{proof}

\begin{remark}\label{lemna_F_half}
Let $\widetilde{\mathcal{P}}^{\widetilde{p}_0}_\nu(s)=(\widetilde{r}(s),\widetilde{\theta}(s)+\mu s)$ be an $\widetilde{F}$-unit speed geodesic 
 emanating from $\widetilde{p}_0\in\{r=a\}$ and $\nu\in(0,m(a))$. One can see that
\begin{equation*}
(\widetilde{\mathcal{P}}^{\widetilde{p}_0}_\nu)^2(r(b))-(\widetilde{\mathcal{P}}^{\widetilde{p}_0}_\nu)^2(r(a))=\int^{r(b)}_{r(a)}\left(\frac{d\theta}{dr}+\mu\frac{ds}{dr}\right)dr.
\end{equation*}
We know that $\widetilde{\mathcal{P}}_\nu(s):=\widetilde{\mathcal{P}}^{\widetilde{p}_0}_\nu$ must be tangent to the parallel $\xi(\nu)$ at $\widetilde{\mathcal{P}}_\nu(t_1)$ and then return to the equator at $\widetilde{\mathcal{P}}_\nu(t_0)$ (see Figure \ref{rem_F_half}). Then, by a similar computation as in the Riemannian case, the $F$-distance from $\widetilde{p}_0$ to $\widetilde{\mathcal{P}}_\nu(t_0)$ in the wind direction is given by the following $F$-{\it half period function}  
\begin{equation}\label{F_Half_front}
\mathcal{H}_F^+(\nu)=\mathcal{H}(\nu)+\psi(\nu),
\end{equation}
where $\psi(\nu):=2\mu(a-\xi(\nu))$, and $\mathcal{H}$ is the $h$-half period function (see \eqref{h_Half}).
For the direction against the wind we obtain 
\begin{equation}\label{F_Half_back}
\mathcal{H}_F^-(\nu)=\mathcal{H}(\nu)-\psi(\nu),
\end{equation}
see \cite{HK} for computational details.
\begin{figure}[H]
\setlength{\unitlength}{0.5cm}
\centering
\begin{picture}(20,10)
\put(1,0.5){\vector(0,1){9}}
\put(0.5,1){\vector(1,0){19.5}}
\put(0.5,9){\vector(1,0){19.5}}
\multiput(0.5,5)(0.5,0){39}{\line(1,0){0.3}}
\put(20,5){\vector(1,0){0.1}}
\put(20.1,0.8){$\widetilde{\theta}$}
\put(0.7,9.5){$\widetilde{r}$}
\put(0.4,0.2){$\widetilde{p}$}
\put(0.4,8.2){$\widetilde{q}$}
\put(-3.2,4.9){$\widetilde{p}_0=(0,a)$}
\put(9.2,5.4){$\widetilde{\gamma}_\nu^{\widetilde{p}_0}(t_0)$}
\put(13.2,5.3){$\widetilde{\mathcal{P}}_\nu(t_0)$}
\put(-0.4,8.9){$2a$}
\put(0,0.9){$0$}
\put(4.5,-0.3){$\mathcal{H}(\nu)$}
\put(7,-1){$\mathcal{H}^+_F(\nu)$}

\put(-1,2.85){$\xi(\nu)$}

\qbezier(1,5)(5,1)(9,4.9)
\put(8.9,4.7){\vector(1,3){0.1}}

\qbezier(1,5)(7,1)(13,4.9)
\put(12.9,4.7){\vector(1,3){0.1}}

\multiput(0.5,2.95)(0.5,0){39}{\line(1,0){0.3}}
\multiput(5,0.5)(0,0.5){19}{\line(0,1){0.3}}
\multiput(9,0.5)(0,0.5){19}{\line(0,1){0.3}}
\multiput(13,0.5)(0,0.5){19}{\line(0,1){0.3}}
\put(6.3,0){\vector(1,0){2.7}}
\put(4.5,0){\vector(-1,0){3.3}}
\put(9.5,-0.7){\vector(1,0){3.5}}
\put(6.8,-0.7){\vector(-1,0){5.6}}
\end{picture}
\bigskip
      \caption{The $h$-half period function and $F$-half period function.}
      \label{rem_F_half}
\end{figure}
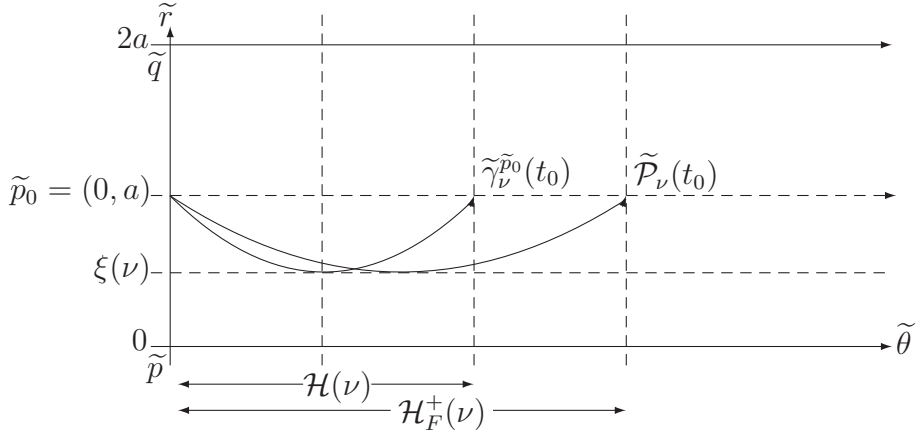

If $m'|_{[0,a)}\neq 0$ then we can assume $m'>0$ on $(0,a)$, in this case, taking into account that $\xi(\nu)=(m|_{(0,a)})^{-1}$ observe that the function $\psi(\nu)=2\mu(a-\xi(\nu))$ is decreasing function when $\xi(\nu)\in(0,a)$ and increasing when $\xi(\nu)\in(a,2a)$. 
\end{remark}

\begin{proposition} \label{prop_F_cut}
Let $x\in M\setminus\{p,q\}$ be an arbitrary point. Then $q_0$ is an $F$-cut point to $x$ on $\mathcal{P}$ if and only if $\hat{q}_0$ is $h$-cut point to $x$ on $\gamma$, where $\mathcal{P}(s)=\varphi(s,\gamma(s))$ is the corresponding $F$-geodesic obtained from $\gamma$, $\mathcal{P}(0)=\gamma(0)=x$.
\end{proposition}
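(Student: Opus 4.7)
The plan is to reduce the claim to the Klingenberg-style characterization of cut points: a point $\gamma(s_0)$ is the cut point of $x$ along $\gamma$ precisely when $s_0$ is the smallest parameter at which either $\gamma(s_0)$ is the first conjugate point of $x$ along $\gamma$, or at least two distinct minimizing geodesic segments from $x$ end at $\gamma(s_0)$. The same characterization is valid in the Finslerian setting. Proposition \ref{Prop_F_con} already supplies the conjugate-point half of the dictionary between $\gamma$ and $\mathcal{P}=\varphi(\cdot,\gamma)$, so what remains is to lift the minimizing side of the characterization through the correspondence induced by the flow $\varphi$.

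For the bijection side, Proposition \ref{F_Geodesic} gives a length-preserving correspondence $\gamma \leftrightarrow \mathcal{P}=\varphi(\cdot,\gamma)$ between $h$- and $F$-unit speed geodesics from $x$. Two such $h$-geodesics $\gamma \neq \sigma$ produce two distinct $F$-geodesics, and they reach the common point $q_0 = \varphi(s_0,\hat{q}_0)$ at time $s_0$ if and only if $\gamma(s_0)=\sigma(s_0)=\hat{q}_0$, because $\varphi_{s_0}$ is a diffeomorphism of $M$. Thus the multiplicity of minimizing geodesic segments at the two endpoints matches, \emph{provided} the minimizing property itself is equivalent on both sides.

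The key lemma to prove is therefore $d_h(x,\hat{q}_0)=s_0 \iff d_F(x,q_0)=s_0$. The inequalities $d_h(x,\hat{q}_0)\le s_0$ and $d_F(x,q_0)\le s_0$ are immediate from $\gamma|_{[0,s_0]}$ and $\mathcal{P}|_{[0,s_0]}$. For the nontrivial directions I would use a concatenation argument. Assume $d_h(x,\hat{q}_0)<s_0$ and pick an $h$-unit speed minimizer $\sigma:[0,T]\to M$ with $T<s_0$, $\sigma(0)=x$, $\sigma(T)=\hat{q}_0$. Then $\mathcal{Q}(t):=\varphi(t,\sigma(t))$ is $F$-unit speed on $[0,T]$ and ends at $\varphi(T,\hat{q}_0)$; concatenating it with the $W$-flow arc $u\in[T,s_0]\mapsto \varphi(u,\hat{q}_0)$ gives a piecewise smooth curve from $x$ to $q_0$. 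A direct computation from \eqref{Randers_metrics} yields $F(W)=\frac{\mu m}{1+\mu m}$ along the parallel $\{r=r(\hat{q}_0)\}$, so the additional $F$-length is $\frac{\mu m(r(\hat{q}_0))}{1+\mu m(r(\hat{q}_0))}(s_0-T)<s_0-T$ and the total $F$-length is strictly less than $s_0$, contradicting $d_F(x,q_0)=s_0$. The converse is symmetric: an $F$-shorter curve $\mathcal{Q}:[0,T]\to M$ yields via $\sigma(t):=\varphi(-t,\mathcal{Q}(t))$ an $h$-unit speed curve from $x$ to $\varphi(s_0-T,\hat{q}_0)$, and concatenation with the anti-flow arc along $\{r=r(\hat{q}_0)\}$ of $h$-length $\mu m(r(\hat{q}_0))(s_0-T)<s_0-T$ produces an $h$-curve from $x$ to $\hat{q}_0$ of total length strictly less than $s_0$.

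The main obstacle is precisely this last comparison: the distances at the shifted points $q_0$ and $\hat{q}_0$ must be controlled against each other, and the mild-wind hypothesis $\mu\cdot m(r)<1$ is exactly what guarantees that the correction terms $\frac{\mu m}{1+\mu m}(s_0-T)$ and $\mu m(s_0-T)$ are strictly smaller than the gap $s_0-T$, producing the required contradictions. Once the minimizing-property equivalence is established, combining it with Proposition \ref{Prop_F_con} and the bijection of geodesics yields that $q_0=\varphi(s_0,\hat{q}_0)$ is the $F$-cut point of $x$ along $\mathcal{P}$ if and only if $\hat{q}_0$ is the $h$-cut point of $x$ along $\gamma$, which is the assertion of the proposition.
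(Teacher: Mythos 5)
Your proof is correct and rests on the same mechanism as the paper's: transport the competing curve through the flow $\varphi$ of $W$ and bridge the resulting endpoint offset by an integral curve of the wind, whose length is strictly smaller than the parameter gap because $\Vert W\Vert_h=\mu m<1$ (equivalently $F(W)=\frac{\mu m}{1+\mu m}<1$). The paper writes only the one-sided contradiction/triangle-inequality version of this argument, bridging in the metric $h$, whereas you make it symmetric via the lemma $d_h(x,\hat q_0)=s_0 \iff d_F(x,q_0)=s_0$; note that this lemma alone already identifies the cut point as the last minimizing parameter along corresponding geodesics, so the Klingenberg characterization and the appeal to Proposition \ref{Prop_F_con} are not actually needed.
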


\begin{proof}
Let $\gamma:[0,l]\to M$ be an $h$-unit minimizing geodesic from $x$ to $\hat{q}_0=\gamma(l)$ and $\hat{q}_0$ is a $h$-cut point of $x$, i.e. $\hat{q}_0\in \mathcal{C}^h_x$.

Let $\mathcal{P}(s)$ be the $F$-unit geodesic obtain from $\gamma(s)$ and let $q_0:=\mathcal{P}(l)$.\\
Assume $q_0$ is not $F$-cut point of $x$ on $\mathcal{P}$, that is there exists a shorter minimizing $F$-geodesic $\mathcal{P}_0:[0,l_0]\to M$ from $x=\mathcal{P}_0(0)$ to $q_0=\mathcal{P}_0(l_0)$ where $d_F(x,q_0):=l_0<l$.

From $\mathcal{P}_0$, we construct the corresponding $h$-geodesic
\begin{equation*}
\gamma_0:[0,l_0]\to M,\quad \gamma_0(s)=\varphi(-s,\mathcal{P}(s)),
\end{equation*}
where $\gamma_0(0)=\mathcal{P}_0(0)=x$ and $\gamma_0(l_0)=\varphi(-l_0,\mathcal{P}_0(l_0))=\varphi(-l_0,q_0)=\varphi_{q_0}(\l_0)$.

Let us denote by $\zeta$ the curve
\begin{equation*}
\zeta:[-l_0,-l]\to M,\quad \zeta(s)=\varphi(s,q_0),
\end{equation*}
(see Figure \ref{proof of prop F cut}).

Then, from triangle inequality, we have
\begin{equation}\label{main_lem_1}
\mathcal{L}_h(\zeta)\geq \mathcal{L}_h(\gamma)-\mathcal{L}_h(\gamma_0).
\end{equation}

On the other hand, we compute $\mathcal{L}_h(\zeta)$ as follows
\begin{equation*}
\Vert\dot{\zeta}(s)\Vert^2_h=\Vert W_{\varphi(s,q_0)}\Vert^2_h=\Vert d\varphi(W_{q_0})\Vert^2_h=\Vert W_{q_0}\Vert^2_h=(\mu m(r(q_0)))^2<1,
\end{equation*}
where $d\varphi$ is identity from \eqref{iden_matrix}.

It follows that
\begin{equation}\label{main_lem_2}
\mathcal{L}_h(\zeta)=\int_{-l}^{-l_0} \Vert \dot{\zeta}(s)\Vert_h ds<\int_{-l}^{-l_0}ds=l-l_0=\mathcal{L}_h(\gamma)-\mathcal{L}_h(\gamma_0).
\end{equation}
From \eqref{main_lem_1} and \eqref{main_lem_2}, we get a contradiction, hence $q_0$ is an $F$-cut point of $x$ along $\mathcal{P}$.
 
$\qedd$

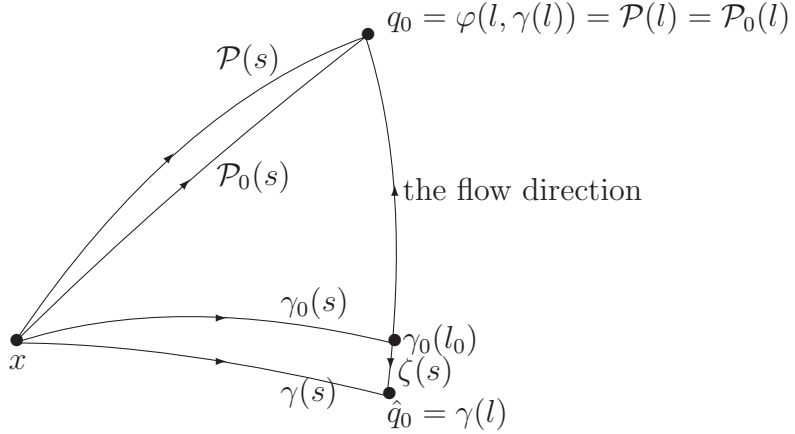
\begin{figure}[H]
\setlength{\unitlength}{0.7cm}
\centering
\begin{picture}(10,10)
\put(-0.1,1.9){$\bullet$}
\put(6.9,0.9){$\bullet$}
\put(6.5,7.7){$\bullet$}
\qbezier(0,2)(3,6.5)(6.6,7.8)
\qbezier(0,2)(3,5)(6.6,7.8)
\qbezier(0,2)(3,2)(7,1)
\qbezier(0,2)(3,3)(7.1,2)
\qbezier(6.6,7.8)(7.5,5)(7,1)
\put(7,1.9){$\bullet$}
\put(7.15,5){\vector(0,1){0}}
\put(7.3,4.7){the flow direction}
\put(7.2,1.3){$\zeta(s)$}
\put(-0.1,1.5){$x$}
\put(7,0.5){$\hat{q}_0=\gamma(l)$}
\put(7.3,1.9){$\gamma_0(l_0)$}
\put(7,8){$q_0=\varphi(l,\gamma(l))=\mathcal{P}(l)=\mathcal{P}_0(l)$}
\put(5,2.6){$\gamma_0(s)$}
\put(5,0.9){$\gamma(s)$}
\put(3.8,7.2){$\mathcal{P}(s)$}
\put(3.8,5){$\mathcal{P}_0(s)$}
\put(4,1.65){\vector(1,0){0}}
\put(4,2.47){\vector(1,0){0}}
\put(3,5.6){\vector(1,1){0}}
\put(3.3,5.1){\vector(1,1){0}}
\put(7.07,1.5){\vector(0,-1){0}}
\end{picture}
      \caption{The proof of Proposition \ref{prop_F_cut}.}
      \label{proof of prop F cut}
\end{figure}

\end{proof}
Here is the proof of our Theorem \ref{thm_F_cut_locus}

{\it Proof of Theorem \ref{thm_F_cut_locus}.}
Let $x\in M\setminus\{p,q\}$.
We recall that the flow for navigation data is $\varphi(s;r(s),\theta(s))=(r(s),\theta(s)+\mu s)$. Propositions \ref{Prop_F_con} and \ref{prop_F_cut} imply that $F$-cut locus is corresponding to the $h$-cut locus.
\begin{enumerate}
\item The case when $\mathcal{K}$ is monotone non-increasing.

\quad  In Riemannian case the $h$-cut locus $\mathcal{C}^h_x$ of $x$, when the Gaussian curvature is monotone non-increasing, is a subarc of the opposite half meridian $\{\theta=\pi\}$, which are denote by $\tau_x\vert_{[c,2a-c]}$, where $\tau_x(c)$ is the $h$-first conjugate point of $x$ along $\tau_x$.\\
Therefore by taking into account Propositions \ref{Prop_F_con} and \ref{prop_F_cut} the $F$-cut locus is the following subarc of the opposite half bending meridian of $x$:
\begin{equation*}
\mathcal{C}^F_x=\varphi(d(x,\tau(t)),\tau(t)),\quad t\in[c,2a-c].
\end{equation*}

\item The case when $\mathcal{K}$ is monotone non-decreasing.

\quad In the Riemannian case (see \cite{ST}), if the Gaussian curvature $G$ is monotone non-decreasing then the $h$-cut locus $\mathcal{C}^h_x$ of $x$ is a subarc of the antipodal parallel $\{r=2a-r(x)\}$, that is 
\begin{equation*}
\mathcal{C}^h_x=r^{-1}(2a-r(x))\cap \theta^{-1}\{\mathcal{H}(m),2\pi-\mathcal{H}(m)\},
\end{equation*}
where $\mathcal{H}$ is $h$-half period function defined in \eqref{h_Half} and $m:=m(r(x))$.\\
Next, let $\hat{q}_0$ be the $h$-first conjugate point of $x$ on front side, i.e.
\begin{equation*}
\hat{q}_0=(2a-r(x),\mathcal{H}(m)),
\end{equation*}
and recall that our wind is blowing along the parallels, therefore the $F$-first conjugate point to $x$ is
\begin{equation*}
r^{-1}(2a-r(x))\cap \theta^{-1}\{\mathcal{H}(m)+\psi(x)\},
\end{equation*}
where $\psi(x)=\mu \cdot d(x,\hat{q}_0)$. On the other hand the $F$-first conjugate point to $x$ on the back side is
\begin{equation*}
r^{-1}(2a-r(x))\cap \theta^{-1}\{2\pi-(\mathcal{H}(m)-\psi(x))\},
\end{equation*}
hence we obtain
\begin{equation*}
\mathcal{C}_x^F=r^{-1}(2a-r(x))\cap \theta^{-1}\{\mathcal{H}(m)+\psi(x),2\pi-(\mathcal{H}(m)-\psi(x))\}.
\end{equation*}

\item The case when $\mathcal{K}$ is constant.

\quad Let $M$ be the round sphere of radius $R$. Recall that in the Riemannian case when $G=\frac{1}{R^2}$ is constant, the cut locus of any point $x\in M\setminus\{p,q\}$ is its antipodal point, i.e. $\mathcal{C}^h_x=\hat{q}_0=(2a-r(x),\pi)$, where $\theta(x)=0$. Since $d_h(x,\hat{q}_0)$ is equal to the half of circumference, i.e. $d_h(x,\hat{q}_0)=\pi\cdot R$, from Proposition \ref{prop_F_cut} we obtain that the $F$-cut locus of $x$ is
\begin{equation*}
\begin{split}
\mathcal{C}^F_x&=\varphi(d_h(x,\hat{q}_0)=\varphi(\pi R,\hat{q}_0)\\&=(2a-r(x),\pi(1+\mu R),
\end{split}
\end{equation*}
where $R$ is radius of round sphere.

\item If the $F$-cut locus of $x\in M\setminus\{p,q\}$ is a single point, say $q\in M$, then $\hat{q}:=\varphi(-l,q)$ is a $h$-cut point, where $d_F(x,q)=l$. Obviously $\hat{q}$ is the only $h$-cut point of $h$ due to the Proposition \ref{prop_F_cut}.

Since the $h$-cut locus of $x\in M\{p,q\}$ is made of a single point $\hat{q}$, we know from \cite{ST} that $G=\frac{1}{R^2}$ must be a positive constant and hence $(M,h)$ is actually the round sphere of radius $R$.

Taking now into account that $(M,h)$ is a constant Gaussian curvature Riemannian surface and $W$ a Killing field on $(M,h)$, it follows from \cite{BRS} that the corresponding Randers metric by the Zermelo navigation must be of constant flag curvature.

$\qedd$
\end{enumerate}

\begin{remark}
We recall that in order to obtain all $F$-geodesics $\mathcal{P}(s)$ with $\frac{d\mathcal{P}}{ds}>0$ emanating from a point $p=\mathcal{P}(0)$, $\theta(p)=0$, we need to consider the Riemannian geodesics $\gamma^p_\nu$ with Clairaut constant $\nu\in(-\mu\cdot m^2(a),m(a))$.

On the other hand, we have determined the $F$-cut point $q$ of a point $p=\mathcal{P}(0)$ by mapping the $h$-cut point $\hat{q}$ of the same point $p=\gamma^p_\nu(0)$ along the corresponding $h$-geodesic $\gamma^p_\nu=\varphi(-s,\mathcal{P}(s))$, and using the structure of the $h$-cut locus for such $h$-geodesics determined in \cite{ST}, for $\nu\in(0,m(a))$. Hence, strictly speaking we have determined only the $F$-cut locus of a point $p$ along the $F$-geodesics corresponding to the $h$-geodesics having $\nu\in(0,m(a))$, having out the $h$-geodesics corresponding to $\nu\in(-\mu\cdot m^2(a),0)$.

However, in the Riemannian case, due to the reversibility of $h$-geodesics and symmetry of the distance function $d_h$, it is easy to observe that the cut locus of a point $p\in M$ made of $h$-cut points along the $h$-geodesics $\gamma^p_\nu$, $\nu\in(-\mu\cdot m^2(a),0)$ is actually a subset of the $h$-cut locus of $p$ made of $h$-cut points along all $h$-geodesics $\gamma^p_\nu$, $\nu\in(0,m(a))$, hence we are not missing any points in $\mathcal{C}^F_p$.
\end{remark}

\section{The behaviour of cut locus when the cut locus of a point on equator is the subarc of the equator}\label{proof_thm_F_cut_2}
\subsection{The cut locus}

\quad From the previous section, we can see that, if the Gaussian curvature is monotone non-decreasing (increasing) then $h$-half period function is monotone non-increasing (decreasing), but the inverse is not true, i.e. if $h$-half period function is monotone non-increasing does not implies Gaussian curvature is monotone non-decreasing.

In this section, we will consider the more general case by extending the results in \cite{BCST} to the Randers case.

Let $(M,h)$ be the Riemannian 2-sphere of revolution considered in the previous sections, but in this section we 
do not assume the second condition in Remark \ref{rem_2_sphere_properties},
and let $W=\mu\cdot\frac{\partial}{\partial\theta}$ the wind blowing along the parallels, $\mu<\left\{\frac{1}{\max m(r)}:r\in[0,2a]\right\}$. If we denote by $(M,F=\alpha+\beta)$ the Randers rotational constructed from navigation data $(h,W)$ in Section 2.2, then we have
\begin{proof}[Proof of Theorem \ref{thm_F_cut_2}]
If the cut locus of a point $q$ on $\{r=a\}$ is a subarc of $\{r=a\}$, since the equator is invariant under the flow action, then by Proposition \ref{prop_F_cut} it follows that the $h$-cut locus of the point $q$ is a subarc of $\{r=a\}$. Hence, by using Theorem 3.5 in \cite{BCST} it results that the $h$-cut locus of the point $\widetilde{q}$ is a subarc of the antipodal parallel $\{r=2a-r(\widetilde{q})\}$.

Taking now into account that any parallel is flow-invariant by Proposition \ref{prop_F_cut} it follows that the $F$-cut locus of $\widetilde{q}$ must be a subarc in the antipodal parallel $\{r=2a-r(\widetilde{q})\}$. Clearly, the $F$-cut locus is obtained by rotating the $h$-cut locus by flow action on the parallel $\{r=2a-r(\widetilde{q})\}$.

$\qedd$
\end{proof}

\subsection{Examples}\label{ex_F_cut}
\begin{example}\label{ex_1}
Let us consider the Riemannian 2-sphere of revolution $M_\lambda:=(\mathbb{S}^2,h_\lambda)$, introduced in \cite{BCST}, where
\begin{equation}\label{h_lambda}
h_\lambda=dr^2+m^2_\lambda(r)d\theta^2
\end{equation}
and
\begin{equation}\label{ex1_m}
m_\lambda(r)=\frac{\sqrt{\lambda+1}\cdot\sin r}{\sqrt{1+\lambda\cos^2 r}},\quad \lambda\geq 0.
\end{equation}
It is clear that the function $r\mapsto m_\lambda(r)$ is symmetric with respect to the equator $\{r=\frac{\pi}{2}\}$, and a straightforward computation shows that the Gaussian curvature of $(\mathbb{S}^2,h_\lambda)$ is
\begin{equation*}
G_\lambda(r)=\frac{(\lambda+1)(1-2\lambda\cos^2 r)}{(1+\lambda\cos^2 r)^2}.
\end{equation*}
For $\lambda=0$ one obtains the the round sphere $\Sph^2$ with canonical Riemannian metric and for $\lambda\to\infty$ the metric
\begin{equation*}
h_\infty=dr^2+\tan^2 r d\theta^2,
\end{equation*}
that is singular along the equator $\{r=\frac{\pi}{2}\}$.

By taking the derivative of $G_\lambda$ one can see that $G_\lambda$ is not monotone along the meridian from a pole to the equator. Indeed, we have
\begin{equation*}
G'_\lambda(r)=\frac{2\lambda(\lambda+1)\sin 2r}{(1+\lambda\cos^2r)^3}(2-\lambda\cos^2r).
\end{equation*}

On the other hand, more computations lead to
\begin{equation*}
\mathcal{H}(\nu)=\pi-\frac{\lambda\pi\nu}{\sqrt{\lambda+1}\sqrt{\left(\lambda+1+\lambda\nu^2\right)}},\quad \lambda>0,\quad  \nu\in(0,\sqrt{\lambda+1}),
\end{equation*}
where we use $\xi(\nu)=\nu^2$, and from here
\begin{equation*}
\mathcal{H}'(\nu)=\frac{-\pi\lambda\sqrt{\lambda+1}}{\left(\lambda+1+\lambda\nu^2\right)^\frac{3}{2}},\quad \lambda>0,
\end{equation*}
moreover
\begin{equation*}
\mathcal{H}''(\nu)=\frac{3\pi\lambda^2\nu\sqrt{\lambda+1}}{\left(\lambda+1+\lambda\nu^2\right)^\frac{5}{2}},\quad \lambda>0,
\end{equation*}
see \cite{BCST} for detailed computations.

Then Lemma 3.3 in \cite{BCST} implies that the $h$-cut locus of a point $q$ on $\{r=\frac{\pi}{2}\}$ is a subarc in $\{r=\frac{\pi}{2}\}$ and hence by Theorem 3.5 in \cite{BCST} it results that for this 2-sphere of revolution, the cut locus of any point $\widetilde{q}\in\Sph^2$, $r(\widetilde{q})\in(0,\pi)\setminus\{\frac{\pi}{2}\}$ is a subarc of the antipodal parallel $\{r=2a-r(\widetilde{q})\}$ (see \cite{BCST} for details).

\bigskip

Let us consider the associated Randers rotational metric $F=\alpha+\beta$ obtained by Zermelo's navigation method   (\cite{HCS}, \cite{R}) from the navigation data $(h_\lambda,W)$, where $W=\mu\cdot\frac{\partial}{\partial\theta}$, $\mu<\left\{\frac{1}{\max m_\lambda(r)}:r\in[0,\pi]\right\}=\frac{1}{m_\lambda(\frac{\pi}{2})}=\frac{1}{\sqrt{\lambda+1}}$. From Proposition \ref{Prop_Randers_metric} it follows
\begin{equation*}
(a_{ij})=\left(
\begin{array}{cc}
\frac{1+\lambda\cos^2 r}{1+\lambda\cos^2 r-\mu^2(\lambda+1)\sin^2r} & 0\\
0 & \frac{((\lambda+1)\sin^2r)(1+\lambda\cos^2 r)}{(1+\lambda\cos^2 r-\mu^2(\lambda+1)\sin^2r)^2}
\end{array}\right)
\text{, }
b_i=\left(
\begin{array}{c}
0 \\
\frac{-\mu(\lambda+1)\sin^2r}{1+\lambda\cos^2 r-\mu^2(\lambda+1)\sin^2r}
\end{array}\right).
\end{equation*}
For the sake of simplicity, let us consider
\begin{equation*}
\mu=\frac{1}{2}\cdot\frac{1}{\sqrt{\lambda+1}}.
\end{equation*}
Then \eqref{F_Half_front} implies
\begin{equation*}
\mathcal{H}^+_F(\nu)=\pi-\frac{\lambda\pi\nu}{\sqrt{\lambda+1}\sqrt{\lambda+1+\lambda\nu^2}}+\frac{1}{\sqrt{\lambda+1}}\left(\frac{\pi}{2}-\nu^2\right),\quad \lambda>0
\end{equation*}
and therefore
\begin{equation*}
\begin{split}
(\mathcal{H}^+_F)'(\nu)=\frac{-\lambda\pi\sqrt{\lambda+1}}{(\lambda+1+\lambda\nu^2)^\frac{3}{2}}-\frac{2\nu}{\sqrt{\lambda+1}}, \quad \lambda>0,
\end{split}
\end{equation*}
and
\begin{equation}\label{ex1_H''_F}
(\mathcal{H}^+_F)''(\nu)=\frac{3\pi\lambda^2\nu\sqrt{\lambda+1}}{(\lambda+1+\lambda\nu^2)^\frac{5}{2}}-\frac{2}{\sqrt{\lambda+1}},\quad \lambda>0.
\end{equation}
We observe that if $\mathcal{H}(\nu)$ is monotone non-increasing, then $\mathcal{H}^+_F(\nu)$ is decreasing on $\nu\in(0,\sqrt{\lambda+1})$.

Moreover, observe that the $F$-cut locus of any point $q$ in $\{r=\frac{\pi}{2}\}$ is a subarc of $\{r=\frac{\pi}{2}\}$, as well as, that the $F$-cut locus of any point $\widetilde{q}\in M_\lambda$, such that $r(\widetilde{q})\in(0,\pi)\setminus\{\frac{\pi}{2}\}$ is a subarc of the antipodal parallel $\{r=\pi-r(\widetilde{q})\}$. Indeed, taking into account the $h$-cut locus of the points $q$ and $\widetilde{q}$, respectively and the fact that the equator and parallels are invariant under the flow, the $F$-cut locus can be obtained from Proposition \ref{prop_F_cut}.

Therefore, we obtain
\begin{proposition}\label{prop_ex_1}
Let $(\Sph^2,F_\lambda=\alpha+\beta)$ be the Randers rotational metric induced from the navigation data $(h_\lambda,W)$ on $\Sph^2$ given by \eqref{h_lambda}, \eqref{ex1_m}. If $\lambda>0$, then
\begin{itemize}
\item[(i)] the cut locus of a point $q\in\Sph^2$ on the equator is a subarc of the equator. 
\item[(ii)] the cut locus of a point $\widetilde{q}\in\Sph^2$, distinct from the pair of poles, is a subarc of the antipodal parallel $\{r=\pi-r(\widetilde{q})\}$.
\end{itemize} 
\end{proposition}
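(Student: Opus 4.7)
My plan is to reduce everything to the Riemannian situation via Proposition \ref{prop_F_cut} and then invoke Theorem \ref{thm_F_cut_2}. The key observation is that for the navigation data $(h_\lambda, W)$ with $W = \mu \cdot \frac{\partial}{\partial \theta}$, both the equator $\{r = \pi/2\}$ and every parallel $\{r = c\}$ are invariant sets of the flow $\varphi(s; r,\theta) = (r, \theta + \mu s)$. Consequently, any set that is a subarc of such a flow-invariant curve in the Riemannian cut locus of $x$ remains a subarc of the same curve after transporting $h$-cut points to $F$-cut points by $\varphi$.

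For part (i), I would first establish that the $h_\lambda$-cut locus of a point $q$ on the equator $\{r = \pi/2\}$ is a subarc of the equator. This is a purely Riemannian fact already proved in \cite{BCST}: with the explicit expressions for $\mathcal{H}(\nu)$, $\mathcal{H}'(\nu)$, and $\mathcal{H}''(\nu)$ recalled in Example \ref{ex_1}, one checks that $\mathcal{H}''(\nu) > 0$ for $\nu \in (0, \sqrt{\lambda+1})$, which by Lemma 3.3 of \cite{BCST} guarantees that $\mathcal{C}^{h_\lambda}_q \subset \{r = \pi/2\}$. Having this, I apply Proposition \ref{prop_F_cut}: a point $q_0 \in M$ is an $F$-cut point of $q$ along some $F$-geodesic $\mathcal{P}$ if and only if the corresponding point $\hat{q}_0 = \varphi(-l, q_0)$ is an $h$-cut point of $q$ along the associated $h$-geodesic $\gamma$. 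Since $\hat{q}_0$ lies on $\{r = \pi/2\}$ by the Riemannian step and the equator is $\varphi$-invariant, $q_0$ also lies on $\{r = \pi/2\}$. Hence $\mathcal{C}^F_q \subset \{r = \pi/2\}$, and it is a subarc by connectivity/continuity of the cut locus structure.

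Part (ii) is then immediate from Theorem \ref{thm_F_cut_2}: once we know the $F$-cut locus of a point $q$ on the equator is a subarc of the equator, the theorem asserts that for any $\widetilde q \in \Sph^2$ with $r(\widetilde q) \in (0, \pi) \setminus \{\pi/2\}$, the $F$-cut locus $\mathcal{C}^F_{\widetilde q}$ must be a subarc of the antipodal parallel $\{r = \pi - r(\widetilde q)\}$. One could alternatively argue directly: by part (i) together with Theorem 3.5 of \cite{BCST}, the $h_\lambda$-cut locus of $\widetilde q$ is a subarc of $\{r = \pi - r(\widetilde q)\}$; since this antipodal parallel is $\varphi$-invariant, applying Proposition \ref{prop_F_cut} transports this subarc to another subarc of the same parallel, namely the rotation of $\mathcal{C}^{h_\lambda}_{\widetilde q}$ by the flow angle $\mu \cdot d_{h_\lambda}(\widetilde q, \hat q_0)$.

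The only non-routine step is verifying that the \emph{image} under $\varphi$ of the Riemannian cut arc is itself a subarc (not, say, the whole parallel) and identifying its precise endpoints, but this follows from the fact that $\varphi$ acts as an isometry-like rotation on each parallel and from the explicit endpoint formulas coming from $\mathcal{H}^+_F$ and $\mathcal{H}^-_F$ in \eqref{F_Half_front}--\eqref{F_Half_back}. I do not anticipate a serious obstacle beyond bookkeeping; the conceptual content lies entirely in Proposition \ref{prop_F_cut} and the $\varphi$-invariance of parallels.
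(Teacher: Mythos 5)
Your proposal is correct and follows essentially the same route as the paper: verify the Riemannian facts for $h_\lambda$ via the explicit half-period function and Lemma 3.3/Theorem 3.5 of \cite{BCST}, then transport the $h$-cut locus to the $F$-cut locus using Proposition \ref{prop_F_cut} together with the flow-invariance of the equator and of the parallels. Your alternative of citing Theorem \ref{thm_F_cut_2} for part (ii) is not a genuinely different argument, since that theorem's proof is exactly this same chain.
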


This is the generalization of the first part of Theorem 4.4 in \cite{BCST} to the Randers case. Observe that due to Lemma \ref{F_curvature} and the formula for $G'_\lambda$ it follows that the Randers rotational metric constructed in this example is not of monotone flag curvature along meridian.
 
\begin{remark}
The Riemannian 2-sphere of revolution $(\Sph^2,h_\lambda)$ given by \eqref{h_lambda}, \eqref{ex1_m}, $\lambda\geq 0$ gives an example for Theorem 3.6 in \cite{BCST} due to the fact that the $h$-half period function satisfies
\begin{equation*}
\mathcal{H}'(\nu)<0<\mathcal{H}''(\nu) \text{ for any } \lambda>0.
\end{equation*}
However, this type of relation is not true in the Randers case. Indeed, even though the $h$- and $F$-half period function $\mathcal{H}(\nu)$ and $\mathcal{H}^+_F(\nu)$ have the same monotonicity, respectively, they do not share the same convexity. Formula \eqref{ex1_H''_F} implies that 
$(\mathcal{H}^+_F)''(\nu)$ is not always positive. For instance, numerical simulations show that $(\mathcal{H}^+_F)''(\nu)\leq 0$, for $\lambda \leq 1.5$, while for $\lambda> 1.5$ the function $(\mathcal{H}^+_F)''(\nu)$ can take both, positive and negative values,  where $\nu\in(0,\sqrt{\lambda+1})$, see Figure \ref{fig_ex1}.
\end{remark}
\begin{figure}[H]
  \centering
   \includegraphics[width=6cm,height=5cm]{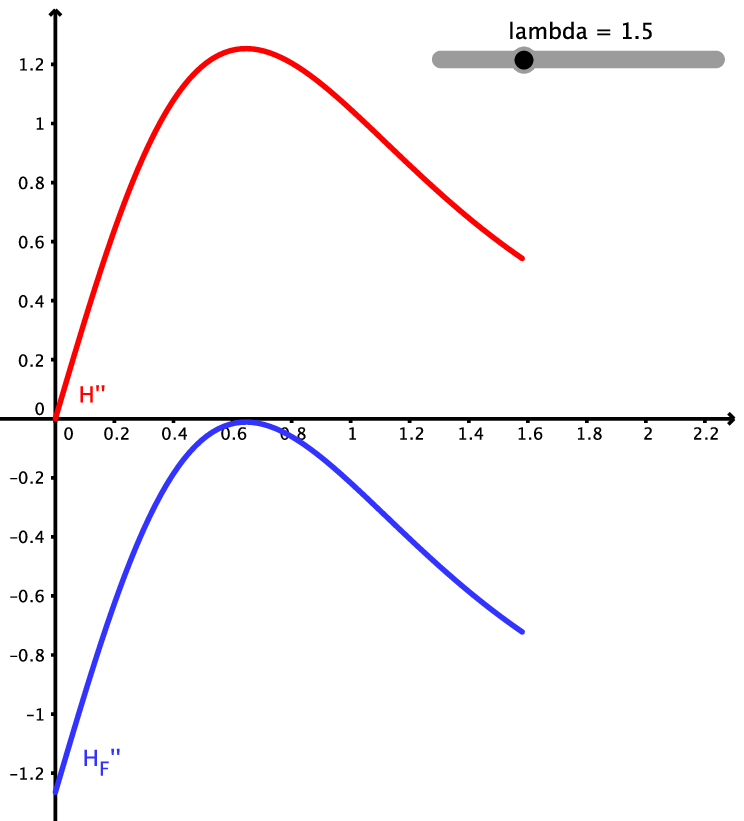}
   \quad
    \includegraphics[width=6cm,height=5cm]{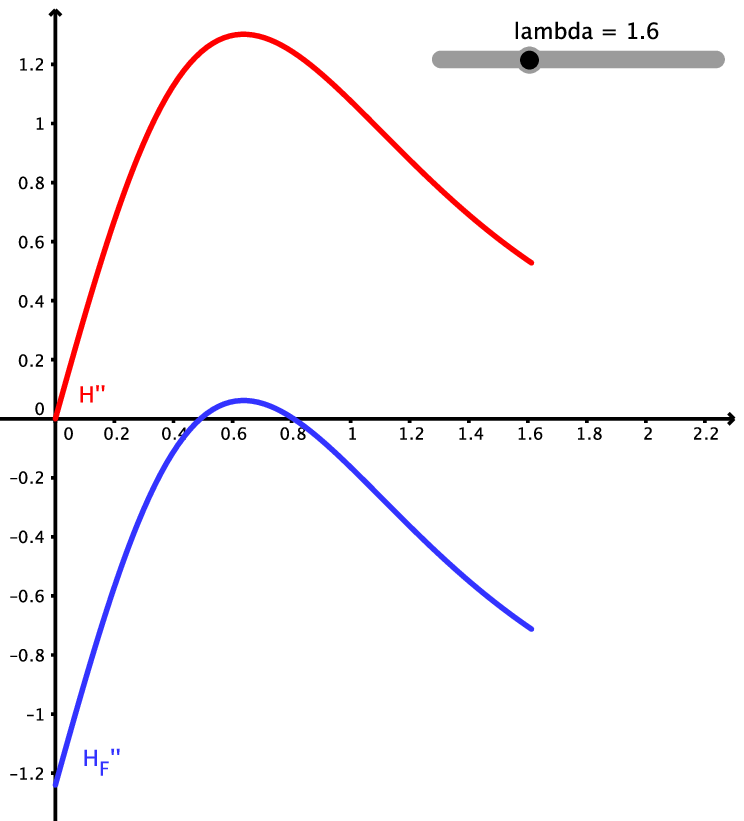}
      \caption{The graphs of $\mathcal{H}''(\nu)$ and $(\mathcal{H}^+_F)''(\nu)$, where $\lambda =1.5$ and $\lambda=1.6$ respectively.}
      \label{fig_ex1}
\end{figure}
\end{example}
\begin{example}
Another example is obtained from the Riemannian 2-sphere of revolution $(\Sph^2,h_\lambda)$ given in \cite{BCJ}, where $h_\lambda$ is given by \eqref{h_lambda} and
\begin{equation*}
m_\lambda(r)=\frac{\sin r}{\sqrt{1-\lambda\sin^2r}},\quad r\in[0,\pi],\quad \lambda\in(0,1).
\end{equation*}
By straightforward computation one can see that
\begin{equation*}
G_\lambda(r)=\frac{(1-\lambda)-2\lambda\cos^2r}{(1-\lambda\sin^2r)^2}
\end{equation*}
and
\begin{equation*}
G'_\lambda(r)=\frac{4\lambda\sin r\cos r (2(1-\lambda)-\lambda\cos^2r)}{(1-\lambda\sin^2 r)^3}.
\end{equation*}
It is clear that for $\lambda\in(0,1)$, $G'_\lambda$ vanishes at the pair of poles and the equator and the Gaussian curvature, $G_\lambda$ is monotone for $\lambda\in(0,\frac{2}{3})$ with a local extremum of $\lambda=\frac{2}{3}$ (see \cite{BCJ} for details).

A similar computation with \cite{BCST} shows that
\begin{equation*}
\mathcal{H}(\nu)=\pi-\frac{\pi\nu\lambda}{\sqrt{1+\lambda\nu^2}},\quad \nu\in\left(0,\sqrt{1-\lambda}\right)
\end{equation*}
and hence
\begin{equation*}
\mathcal{H}'(\nu)=\frac{-\pi\lambda}{(1+\lambda\nu^2)^{\frac{3}{2}}},\quad \mathcal{H}''(\nu)=\frac{3\pi\lambda^2\nu}{(1+\lambda\nu^2)^{\frac{5}{2}}}.
\end{equation*}
(compare with the form in \cite{BCJ} obtained in Hamiltonian formalism).

One can easily see that
\begin{equation}\label{ex_2}
\mathcal{H}'(\nu)<0<\mathcal{H}''(\nu)
\end{equation}
and hence the $h$-cut locus of a point $q$ on the equator is a subarc of the equator (Lemma 3.3 in \cite{BCST}), and the $h$-cut locus of a point $\widetilde{q}$, distinct from equator of $(\Sph^2,h_\lambda)$ is a subarc of the opposite parallel (Lemma 3.4 in \cite{BCST}).

\bigskip

If we consider again the Randers rotational metric $(\Sph^2,F_\lambda=\alpha+\beta)$ obtained by Zermelo's navigation method (\cite{HCS}, \cite{R}) from navigation data $(h_\lambda,W)$, $W=\mu\cdot\frac{\partial}{\partial\theta}$, $\mu<\sqrt{1-\lambda}$, then \eqref{F_Half_front} gives
\begin{equation*}
\mathcal{H}^+_F(\nu)=\pi-\frac{\pi\nu\lambda}{\sqrt{1+\lambda\nu^2}}+\sqrt{1-\lambda}\left(\frac{\pi}{2}-\nu^2\right),\quad \nu\in(0,\sqrt{1-\lambda}),
\end{equation*}
where we consider for simplicity $\mu =\frac{1}{2}\sqrt{1-\lambda}$, and hence
\begin{equation*}
\begin{split}
(\mathcal{H}^+_F)'(\nu)&=\frac{-\pi\lambda}{(1+\lambda\nu^2)^{\frac{3}{2}}}-2(\sqrt{1-\lambda})\nu,\\
(\mathcal{H}^+_F)''(\nu)&=\frac{3\pi\lambda^2\nu}{(1+\lambda\nu^2)^{\frac{5}{2}}}-2\sqrt{1-\lambda}.
\end{split}
\end{equation*}
By a similar argument with Example \ref{ex_1} it follows that Proposition \ref{prop_ex_1} is true for this example as well.
\begin{remark}
When we consider the second derivative of the $F$-half period function $\mathcal{H}_F(\nu)$, we observe that, even through the Riemannian counter part satisfies \eqref{ex_2}, in the Finsler case we have
$(\mathcal{H}^+_F)'(\nu)<0 $, however 
 $(\mathcal{H}^+_F)''(\nu)\leq 0$, for $\lambda \leq 0.6$, while for $\lambda> 0.6$ the function $(\mathcal{H}^+_F)''(\nu)$ can take both, positive and negative values,  where $\nu\in(0,\sqrt{1-\lambda})$, see Figure \ref{fig_ex2}.
that is, in this case also the convexity of the half period function in the Riemannian and Finsler case are quite different.
\end{remark}

\begin{figure}[H]
  \centering
    \includegraphics[width=6cm,height=5cm]{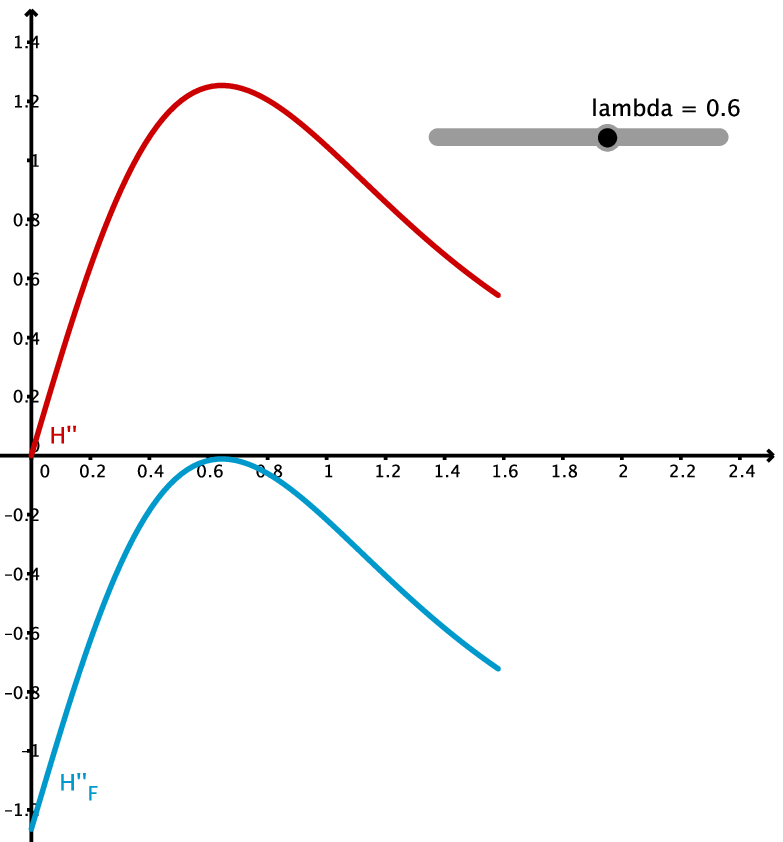}
    \quad
        \includegraphics[width=6cm,height=5cm]{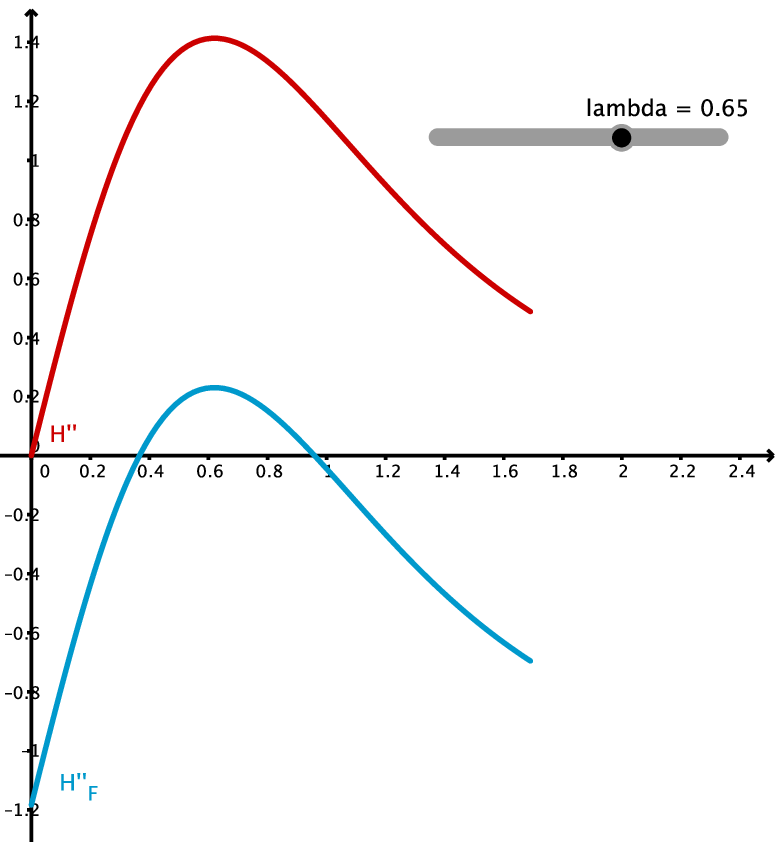}
      \caption{The graphs of $\mathcal{H}''(\nu)$ and $(\mathcal{H}^+_F)''(\nu)$, where $\lambda=0.6$ and $\lambda =0.65$ respectively.}
      \label{fig_ex2}
\end{figure}

\end{example}


\bigskip

\noindent
 Rattanasak HAMA\\
Department of Mathematics,\\
Faculty of Science, KMITL, Bangkok, 10520, Thailand

\medskip
{\tt rattanasakhama@gmail.com}

\medskip

\noindent
Jaipong KASEMSUWAN\\
Department of Mathematics,\\
Faculty of Science, KMITL, Bangkok, 10520, Thailand

\medskip
{\tt jaipui@hotmail.com
}

\medskip

\noindent 
Sorin V. SABAU\\
School of Biological Sciences,
Department of Biology,\\
Tokai University,
Sapporo 005\,--\,8600,
Japan

\medskip
{\tt sorin@tokai.ac.jp}


\begin{thebibliography}{M}

\bibitem
{AT}
     M. Abate, F. Tovena, 
Curves and surfaces, Springer,
2012.

\bibitem
{BCS}
     D.~Bao, S. S.~Chern, Z.~Shen,
An Introduction to Riemann Finsler Geometry, Springer, GTM
\textbf{200},
2000.

\bibitem{BCJ}
B. Bonnard, J. B. Caillau and G. Janin, {\it Conjugate-cut loci and injectivity domains of two-spheres of revolution}, ESAIM: COCV 19 (2013), 533-554.

\bibitem
{BCST}
B. Bonnard, J. B. Caillau, R. Sinclair, M. Tanaka, {\it Conjugate and cut loci of a two-sphere of revolution with application to optimal control}, Ann. I. H. Poincare-AN 26 (2009), 1081--1098.

\bibitem
{BR}
     D.~Bao, C. Robles, 
{\it Ricci and flag curvatures in Finsler geometry},
      in A Sampler of Riemann-Finsler Geometry, MSRI Series 50, 2004, 197--259.

\bibitem
{BRS}
     D.~Bao, C. Robles, Z. Shen 
{\it Zermelo Navigation on Riemannian manifolds},
      J. Diff. Geom, 66(2004), 377--435. 

\bibitem
{HCS}
R. Hama, P. Chitsakul, S. V. Sabau,{\it The Geometry of a Randers rotational surface}, 2015, Publ. Math. Debrecen, 87/3-4 (2015), 473--502.

\bibitem
{HK}
R. Hama, J. Kasemsuwan,{\it The Theory of Geodesics on Some Surface of Revolution}, KMITL Science and Technology Journal, 17 (2017) no. 1, 42--47.


\bibitem
{R}
C. Robles, 
{\it Geodesics in Randers spaces of constant curvature},
      Trans. AMS 359 (2007), no. 4, 1633--1651. 

\bibitem
{SST}
K.~Shiohama, T.~Shioya and M.~Tanaka, 
The Geometry of Total Curvature on Complete Open Surfaces, 
Cambridge tracts in mathematics \textbf{159}, 
Cambridge University Press, Cambridge, 2003.

\bibitem{ST}
R. Sinclair, M. Tanaka
{\it The cut locus of a two sphere of revolution and Toponogov's comparison theorem},
 Tokyo Math. J. 59(2007), 379--399.
 
 \bibitem{SaT}
S. V. Sabau, M. Tanaka,
{\it The cut locus and distance function from a closed subset of a Finsler manifold}, Houston Journal of Mathematics, Vol. 42, No. 4 (2016), 1157-1197.

\bibitem{Mat}
M. Matsumoto, Finsler Geometry in the 20th-Century, In: Handbook of Finsler Geometry, (ed. P. L. Antonelli), Kluwer Academic Publishers, 2004.

\bibitem{S}
Z. Shen, {\it Finsler metrics with K=0 and S=0}, Canadian J. Math. 55 (2003), 112-132.
\end{thebibliography}
\end{document}